\newcommand{\cal}[1]{\mathcal{#1}}
\theoremstyle{plain}
\newtheorem*{theo}{Theorem}
\newtheorem{lemma}{Lemma}[section]
\newtheorem{theorem}[lemma]{Theorem}  
\newtheorem{proposition}[lemma]{Proposition}
\newtheorem{corollary}[lemma]{Corollary}
\theoremstyle{definition}
\newtheorem{definition}[lemma]{Definition}
\let\egthree=\phi
\let\phi=\varphi
\let\varphi=\egthree
\begin{document}
\title{Hyperbolicity of the graph of non-separating multicurves}
\author{Ursula Hamenst\"adt}
\thanks{Partially supported by the Hausdorff Center Bonn
and ERC Grant Nb. 10160104\\
AMS subject classification:57M99}
\date{September 29, 2013}


\begin{abstract} A non-separating multicurve
on a surface $S$ of genus $g\geq 2$ with
$m\geq 0$ punctures is
a multicurve $c$ so that $S-c$ is connected.
For $k\geq 1$ 
define the graph ${\cal N\cal C}(S,k)$
of non-separating $k$-multicurves to be
the graph whose vertices are non-separating multicurves
with $k$ components 
and where two such multicurves are connected
by an edge of length one if they can be realized
disjointly and differ by a single component. 
We show that if $k<g/2+1$ 
then ${\cal N\cal C}(S,k)$
is hyperbolic.
\end{abstract}

\maketitle


\section{Introduction}

The \emph{curve graph} ${\cal C\cal G}$
of an oriented surface $S$ of genus $g\geq 0$
with $m\geq 0$ punctures and
$3g-3+m\geq 2$ is the graph whose vertices 
are isotopy classes of 
essential (i.e. non-contractible and not homotopic 
into a puncture) simple
closed curves on $S$. Two such 
curves are connected by an edge of length one if and only
if they can be realized disjointly. 
The curve graph 
is a locally infinite $\delta$-hyperbolic geodesic metric space of 
infinite diameter \cite{MM99}
for a number $\delta >0$ not depending on the surface
\cite{A12,B12,CRS13,HPW13}.

The \emph{mapping class
group} ${\rm Mod}(S)$ of all isotopy classes
of orientation preserving homeomorphisms of $S$ 
acts on ${\cal C\cal G}$ as
a group of simplicial isometries. 
This action is \emph{coarsely transitive}, i.e. the quotient
of ${\cal C\cal G}$ under this action is a finite graph.
Curve graphs and their geometric properties 
turned out to be an important tool
for the investigation of the geometry of ${\rm Mod}(S)$
\cite{MM00}. 

If the genus $g$ of $S$ is positive then 
for each $k\leq g$ we can 
define another ${\rm Mod}(S)$-graph ${\cal N\cal C}(S,k)$
as follows. Vertices of ${\cal N\cal C}(S,k)$ are
\emph{non-separating $k$-multicurves}, i.e. 
multicurves $\nu$ consisting of $k$ components such that
$S-\nu$ is connected. 
Two such multicurves are connected by an edge of length one
if they can be realized disjointly and differ by a single 
component. The mapping class group of $S$ acts
coarsely transitively as a group of simplicial isometries
on the graph of non-separating $k$-multicurves. 
In fact, the action is transitive on 
vertices. Note that ${\cal N\cal C}(S,1)$ is just the complete
subgraph of ${\cal C\cal G}$ whose vertex set consists
of all non-separating simple closed curves in $S$.

The goal of this note is to show

\begin{theo}\label{cut}
For $g\geq 2$ and 
$k<g/2+1$ the graph ${\cal N\cal C}(S,k)$ of non-separating
$k$-multicurves is hyperbolic.
\end{theo}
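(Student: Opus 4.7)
The plan is to apply the \emph{guessing geodesics} criterion for hyperbolicity of graphs (Bowditch; Masur--Schleimer; Hamenst\"adt): it suffices to specify, for every pair $\nu,\mu\in\mathcal{NC}(S,k)$, a discrete path $p_{\nu\mu}$ between them so that the family is coarsely invariant under bounded perturbations of the endpoints and the triples $(p_{\nu\mu},p_{\mu\eta},p_{\nu\eta})$ satisfy a uniform thin triangles estimate. The preferred paths will be produced by \emph{lifting geodesics} from the non-separating curve graph $\mathcal{NC}(S,1)$, which is quasi-isometric to $\mathcal{CG}(S)$ and hence $\delta$-hyperbolic by the result quoted at the beginning of the introduction.

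There is a natural coarse projection
\[
\pi\colon\mathcal{NC}(S,k)\longrightarrow\mathcal{NC}(S,1),\qquad \nu\mapsto\text{any component of }\nu.
\]
This is well-defined up to error one (all components of a single $\nu$ are pairwise disjoint) and it is $1$-Lipschitz. Given $\nu,\mu\in\mathcal{NC}(S,k)$, select a geodesic $(c_0,\dots,c_n)$ in $\mathcal{NC}(S,1)$ with $c_0\subset\nu$ and $c_n\subset\mu$, and inductively build non-separating $k$-multicurves $\nu=\nu_0,\nu_1,\dots,\nu_n=\mu$ with $c_i\subset\nu_i$ and $d_{\mathcal{NC}(S,k)}(\nu_{i-1},\nu_i)\leq D$ for a universal $D$. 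The required bound will come from a coarse \emph{extension lemma}: any non-separating curve $c$ can be completed to a non-separating $k$-multicurve, and any two such completions lie at uniformly bounded distance in $\mathcal{NC}(S,k)$. Rewriting the hypothesis as $2(k-1)<g$ shows that the cut surface $S-c$ has genus $g-1$ large enough to host the pairwise non-isotopic non-separating $(k-1)$-multicurves whose existence makes the interpolating surgeries possible.

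For the thin triangles estimate, given three vertices $\nu,\mu,\eta$ with preferred paths $p_{\nu\mu},p_{\nu\eta},p_{\mu\eta}$, the image under $\pi$ is a $\delta$-thin geodesic triangle by hyperbolicity of $\mathcal{NC}(S,1)$. Invoking the coarse uniqueness part of the extension lemma, whenever the projections of two of the paths pass within distance $\delta$ in $\mathcal{NC}(S,1)$, the paths themselves pass within a uniform constant in $\mathcal{NC}(S,k)$; this transfers thinness from the base graph to the total space and yields the hypothesis of the guessing geodesics criterion.

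The main obstacle is the coarse uniqueness in the extension lemma: showing that any two non-separating $k$-multicurves $\nu,\nu'$ sharing a common component $c$ lie at uniformly bounded $\mathcal{NC}(S,k)$-distance. This reduces to a uniform-diameter connectedness statement for an auxiliary graph whose vertices are non-separating $(k-1)$-multicurves on the cut surface $S-c$ and whose edges record a controlled move in $\mathcal{NC}(S,k)$. The hypothesis $k<g/2+1$ enters decisively here: beyond this range, the genus of the cut surface is too small to admit the rich family of pairwise exchangeable non-separating curves needed to interpolate between arbitrary completions, and the uniform bound fails.
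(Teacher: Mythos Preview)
Your central claim---the ``coarse uniqueness'' part of the extension lemma, asserting that any two non-separating $k$-multicurves sharing a component $c$ lie at uniformly bounded distance in $\mathcal{NC}(S,k)$---is false, and this collapses the entire strategy. Concretely, take $k=2$, fix a non-separating curve $c$, and let $\phi$ be a pseudo-Anosov on the connected surface $S-c$. For any completion $\nu=c\cup a$ the multicurves $\phi^m\nu=c\cup\phi^m a$ are all completions of $c$, yet $d_{\mathcal{NC}(S,2)}(\nu,\phi^m\nu)\to\infty$. To see this, note that every non-separating $2$-multicurve has a component meeting $S-c$, so the subsurface projection $\pi_{S-c}$ is everywhere defined and coarsely Lipschitz on $\mathcal{NC}(S,2)$; since $\phi$ acts loxodromically on $\mathcal{CG}(S-c)$, the projections of $\nu$ and $\phi^m\nu$ diverge. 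In the paper's language, the set of completions of $c$ is exactly the subgraph $H_c$, which is isomorphic to $\mathcal{NC}(S-c,1)$ and hence has infinite diameter (and is in fact uniformly quasi-convex in $\mathcal{NC}(S,2)$). The same phenomenon persists for every $k\geq 2$, well inside your range $k<g/2+1$, so your proposed role for that hypothesis is not the correct one.

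A secondary issue: you assert that $\mathcal{NC}(S,1)$ is quasi-isometric to $\mathcal{CG}(S)$. This is only true when $S$ has at most one puncture; with two or more punctures the inclusion is not a quasi-isometry, and the paper devotes all of Section~3 to establishing hyperbolicity of $\mathcal{NC}(S,1)$ by an independent electrification argument over thick subsurfaces. Even granting hyperbolicity of $\mathcal{NC}(S,1)$ as input, your lifting scheme cannot work because $\pi$ has infinite-diameter fibers. The paper's approach is essentially the opposite of collapsing these fibers: it treats the subgraphs $H_\nu$ (for $\nu\in\mathcal{NC}(S,k-1)$) as the coned-off regions in an electrification, identifies the electrified graph with $\mathcal{NC}(S,k-1)$, and then verifies the bounded penetration property via subsurface projections to descend hyperbolicity through Theorem~\ref{hypextension}. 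The bound $k<g/2+1$ enters not through connectedness of an auxiliary graph but to guarantee that every non-separating $k$-multicurve has nonempty projection into every subsurface of genus $\geq g-k+1$, which is what drives the bounded penetration argument.
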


For the proof of the theorem, we adopt a strategy
from \cite{H13}. Namely, we begin with showing 
that for $g\geq 2$ the graph ${\cal N\cal C}(S,1)$ 
is hyperbolic. This is easy if 
$S$ has at most one puncture, in fact 
in this 
case the inclusion map ${\cal N\cal C}(S,1)\to {\cal C\cal G}$ 
is a quasi-isometry (see Section 3). 
If $S$ has at least two punctures
then this inclusion is not a quasi-isometry any more. 
In this case we apply a 
tool from \cite{H13}. This tool 
is also used in Section 4 to successively add components to 
the multicurve until the number $k<g/2+1$ of components
is reached.

We summarize the results 
from \cite{H13} which we need in Section 2.
At the end of this note we indicate an example
indicated to us by
Tarik Aougab and Saul Schleimer which shows 
that the strict
bound $g/2+1$ for the number of components of the
multicurve in Theorem \ref{cut} is sharp.

\bigskip

{\bf Acknowledgement:} This work was carried out
while the author visited the Institute for Pure
and Applied Mathematics in Los Angeles.
In a first version of this paper, I erraneously misstated
the range of the number of components of 
multicurves for which Theorem \ref{cut} is valid.  
I am grateful to Tarik Aougab and 
Saul Schleimer for pointing out this error to me.

\section{Hyperbolic extensions of hyperbolic graphs}

In this section we consider 
any (not necessarily locally finite) metric graph $({\cal G},d)$
(i.e. edges have length one). Let ${\cal C}$ be any 
finite, countable or empty index set. 
For a given family
${\cal H}=\{H_c\mid c\in {\cal C}\}$ of complete connected
subgraphs of ${\cal G}$ define the \emph{${\cal H}$-electrification}
of ${\cal G}$ to be the 
metric graph $({\cal E\cal G},d_{\cal E})$ which is obtained
from ${\cal G}$ by adding vertices and edges as follows.

For each $c\in {\cal C}$ there is a unique vertex 
$v_c\in {\cal E\cal G}-{\cal G}$. This vertex
is connected with each of the vertices of $H_c$ by a single
edge of length one, and it is not connected with any other vertex.

\begin{definition}\label{rconvex} For a number $r>0$ 
the family ${\cal H}$ is called \emph{$r$-bounded} if 
for $c\not= d\in {\cal C}$ the intersection
$H_c\cap H_d$ has diameter at most $r$ where 
the diameter is taken with respect to the intrinsic
path metric on $H_c$ and $H_d$.
\end{definition}

A family which is $r$-bounded for some $r>0$ is simply
called \emph{bounded}.

In the sequel all parametrized 
paths $\gamma$ in ${\cal G}$ or ${\cal E\cal G}$
are supposed to be \emph{simplicial}. This means that
they are defined on a closed connected 
subset of the reals whose finite endpoints (if any) are
integers. We require that
the image of every integer is a vertex, and that the 
restriction to 
an integral interval $[k,k+1]$ either is homeomorphism
onto an edge, or it is constant.
In particular, simplicial paths are continuous.

Call a simplicial path $\gamma$ in ${\cal E\cal G}$ 
\emph{efficient} if for every $c\in {\cal C}$
we have $\gamma(k)=v_c$ for at most one integer $k$.
Note that if $\gamma$ is an efficient 
simplicial path in ${\cal E\cal G}$
which passes through $\gamma(k)=v_c$ for some $c\in {\cal C}$
then $\gamma(k-1)\in H_c,\gamma(k+1)\in H_c$.
This is true because the vertex $v_c\in {\cal E\cal G}$ is only
connected with vertices in $H_c$ by an edge.

For a number $L>1$, an \emph{$L$-quasi-geodesic} in 
${\cal E\cal G}$ is a path 
$\gamma:[a,b]\to {\cal E\cal G}$ such that for all 
$a\leq s <t\leq b$ we have
\[\vert t-s\vert/L-L\leq d(\gamma(s),\gamma(t))\leq 
L\vert t-s\vert +L.\]
In slight deviation from this standard definition, 
throughout we
require in the sequel that all quasi-geodesics are
simplicial, in particular, they are continuous.
We will often but not always state this explicitly.

\begin{definition}\label{convexpenetration}
The family ${\cal H}$ has 
the \emph{bounded penetration property} if it
is $r$-bounded for some $r>0$ and if 
for every $L>0$ there is a number
$p(L)>2r$ with the following property.
Let $\gamma$ be an efficient simplicial 
$L$-quasi-geodesic in 
${\cal E\cal G}$, let $c\in {\cal C}$ and 
let $k\in \mathbb{Z}$ be such that
$\gamma(k)=v_c$. If the distance in $H_c$ between
$\gamma(k-1)$ and $\gamma(k+1)$ is at least $p(L)$
then every efficient simplicial $L$-quasi-geodesic $\gamma^\prime$
in ${\cal E\cal G}$ with the same endpoints as $\gamma$
passes through $v_c$. Moreover, if 
$k^\prime\in \mathbb{Z}$ is 
such that $\gamma^\prime(k^\prime)=v_c$
them the distance in $H_c$ between 
$\gamma(k-1),\gamma^\prime(k^\prime-1)$ 
and between $\gamma(k+1),\gamma^\prime(k^\prime+1)$  
is at most $p(L)$.
\end{definition}

Let ${\cal H}$ be as in Definition \ref{convexpenetration}.
Define an \emph{enlargement} $\hat \gamma$ of an efficient
 simplicial $L$-quasi-geodesic $\gamma:[0,n]\to {\cal E\cal G}$ 
with endpoints $\gamma(0),\gamma(n)\in {\cal G}$ as
follows. Let $0<k_1<\dots <k_s< n$ be those points
such that $\gamma(k_i)=v_{c_i}$ for some $c_i\in {\cal C}$.
Then $\gamma(k_i-1),\gamma(k_i+1)\in H_{c_i}$. 
For each $i\leq s$ replace 
$\gamma[k_i-1,k_i+1]$ by a simplicial geodesic in 
the graph $H_{c_i}$ with
the same endpoints. Note that since we require
that the endpoints of $\gamma$ are vertices in ${\cal G}$,
an enlargement of $\gamma$ is a path with the same endpoints.

For a number $k>0$ define a subset $Z$ of the 
metric graph ${\cal G}$ to be 
\emph{$k$-quasi-convex}
if any geodesic with both endpoints in $Z$ is contained in the
$k$-neighborhood of $Z$. In particular, up to perhaps increasing
the number $k$, any two points in $Z$ can be connected
in $Z$ by a (not necessarily continuous) path which is 
a $k$-quasi-geodesic in ${\cal G}$.

In Section 5 of \cite{H13} the following is shown.

\begin{theorem}\label{hypextension}
Let ${\cal G}$ be a metric graph and let 
${\cal H}=\{H_c\mid c\in {\cal C}\}$ 
be a bounded family of 
complete connected subgraphs of ${\cal G}$. 
Assume that the following conditions are satisfied.
\begin{enumerate}
\item There is a number $\delta >0$ such that 
each of the graphs $H_c$ is $\delta$-hyperbolic.
\item The ${\cal H}$-electrification ${\cal E\cal G}$
of ${\cal G}$ is hyperbolic.
\item ${\cal H}$ has the bounded  
penetration property.
\end{enumerate}
Then ${\cal G}$ is hyperbolic. There is a number $L>1$ such that
enlargements of 
geodesics in ${\cal E\cal G}$ are $L$-quasi-geodesics in ${\cal G}$.
The subgraphs $H_c$ are uniformly quasi-convex.
\end{theorem}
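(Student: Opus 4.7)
The plan is to follow the cone-off/electrification strategy in the style of Farb, Bowditch, and Masur--Minsky: first show that enlargements of efficient $\mathcal{EG}$-geodesics are uniform quasi-geodesics in $\mathcal{G}$; then transfer the thin-triangle property from $\mathcal{EG}$ to $\mathcal{G}$ via these enlargements; and finally deduce uniform quasi-convexity of the subgraphs $H_c$ as a short corollary.

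For Step~1 fix $x,y\in \mathcal{G}$ and let $\sigma$ be a $\mathcal{G}$-geodesic from $x$ to $y$ of length $n$. Viewed in $\mathcal{EG}$, the path $\sigma$ is simplicial of $\mathcal{EG}$-length $n$, so $d_{\mathcal{EG}}(x,y)\leq n$. Let $\gamma$ be an $\mathcal{EG}$-geodesic from $x$ to $y$ (automatically efficient) and let $\hat\gamma$ be its enlargement; as a $\mathcal{G}$-path from $x$ to $y$ we already have $\mathrm{length}(\hat\gamma)\geq n$. For the reverse inequality, convert $\sigma$ into an efficient simplicial $\mathcal{EG}$-quasi-geodesic $\sigma'$ with the same endpoints by replacing each maximal sub-arc of $\sigma$ lying in a single $H_c$ by the two-edge path through $v_c$ and, if necessary, amalgamating repeated visits to the same $v_c$; $r$-boundedness of $\mathcal{H}$ guarantees $\sigma'$ is efficient and an $L_0$-quasi-geodesic for some $L_0=L_0(r)$. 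Now for every visit of $\gamma$ to a cone point $v_{c_i}$ with \emph{deep} penetration $\ell_i:=d_{H_{c_i}}(\gamma(k_i-1),\gamma(k_i+1))>p(L_0)$, the bounded penetration property forces $\sigma'$ to pass through $v_{c_i}$ with entry and exit vertices within $H_{c_i}$-distance $p(L_0)$ of those of $\gamma$; hence the corresponding sub-arc of $\sigma$ has $\mathcal{G}$-length at least $\ell_i-2p(L_0)$. Summing these contributions, bounding each shallow visit by $p(L_0)$, and observing that the number of cone-point visits of $\gamma$ is at most $\mathrm{length}_{\mathcal{EG}}(\gamma)\leq n$, yields $\mathrm{length}(\hat\gamma)\leq Cn$ for a constant $C$ depending only on $L_0$ and $p(L_0)$.

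For Step~2, consider vertices $x,y,z$ and efficient $\mathcal{EG}$-geodesics $\gamma_{xy},\gamma_{yz},\gamma_{zx}$ between them, which form a uniformly thin triangle by hypothesis~(2). The enlargements $\hat\gamma_{xy},\hat\gamma_{yz},\hat\gamma_{zx}$ are uniform $\mathcal{G}$-quasi-geodesics by Step~1, so it suffices to show that the triangle they form is slim in $\mathcal{G}$. A point on $\hat\gamma_{xy}$ lies either on a portion unchanged by enlargement, where $\mathcal{EG}$-slimness coupled with Step~1 produces a nearby point on another enlargement; or on an $H_c$-geodesic inserted at a visit of $\gamma_{xy}$ to $v_c$. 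In the latter case, either another side also visits $v_c$, in which case bounded penetration and the $\delta$-hyperbolicity of $H_c$ supplied by~(1) produce a nearby point on the corresponding inserted geodesic, or neither other side visits $v_c$, in which case $\mathcal{EG}$-slimness applied to the two endpoints of the $v_c$-passage, followed by bounded penetration, forces the point close to one of the other enlargements. Uniform slimness of triangles of enlargements, together with Step~1, then gives hyperbolicity of $\mathcal{G}$ by the standard quasi-geodesic triangle criterion.

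Uniform quasi-convexity of each $H_c$ in $\mathcal{G}$ follows at once: for $u,v\in H_c$ the two-edge path $u\to v_c\to v$ is an efficient $\mathcal{EG}$-quasi-geodesic whose enlargement is an $H_c$-geodesic from $u$ to $v$, and this $H_c$-geodesic is a $\mathcal{G}$-quasi-geodesic by Step~1, hence lies in a uniform neighborhood of every $\mathcal{G}$-geodesic from $u$ to $v$ by stability of quasi-geodesics in the now hyperbolic space $\mathcal{G}$. I expect the main obstacle to be Step~1: the bookkeeping of deep versus shallow penetrations and verifying that $r$-bounded overlap actually suffices to make $\sigma'$ efficient and $L_0$-quasi-geodesic with constants depending only on the data. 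Once Step~1 is in hand, the slim-triangle transfer and the quasi-convexity statement are standard.
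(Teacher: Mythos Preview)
The paper does not prove this theorem: it is quoted verbatim as a black box from \cite{H13} (Section~5 of that paper), so there is no proof in the present paper to compare your proposal against. Your sketch is broadly the standard electrification/de-electrification argument in the style of Farb and Bowditch, and that is indeed how \cite{H13} proceeds.

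That said, one step in your outline is genuinely delicate and, as written, not justified by the hypothesis you invoke. In Step~1 you claim that after replacing maximal $H_c$-subarcs of the $\mathcal{G}$-geodesic $\sigma$ by cone-point passages and ``amalgamating repeated visits to the same $v_c$'', the result $\sigma'$ is an efficient $L_0$-quasi-geodesic in $\mathcal{EG}$ with $L_0$ depending only on $r$. The $r$-boundedness hypothesis only bounds the diameter of $H_c\cap H_d$ for $c\neq d$; it says nothing about a single $H_c$ being revisited after an excursion through vertices not lying in any $H_d$. If $\sigma$ enters $H_c$, leaves it entirely (passing through $\mathcal{G}\setminus\bigcup_d H_d$), and then re-enters $H_c$ much later, amalgamating those two visits collapses a long $\mathcal{EG}$-subpath to length~2 and can destroy the quasi-geodesic property. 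The usual fix is either to show first that a $\mathcal{G}$-geodesic is an \emph{unparametrized} quasi-geodesic in $\mathcal{EG}$ (which uses more than $r$-boundedness---one needs that $\mathcal{G}$-geodesics cannot oscillate in and out of a single $H_c$ arbitrarily, and this is where quasi-convexity-type input or the bounded penetration property itself is already used), or to avoid $\sigma'$ altogether and instead bound $\mathrm{length}(\hat\gamma)$ directly by comparing $\hat\gamma$ to enlargements of \emph{other} efficient $\mathcal{EG}$-quasi-geodesics via bounded penetration. Either way the constant $L_0$ will depend on $p(\cdot)$ and the hyperbolicity constant of $\mathcal{EG}$, not just on $r$. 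You correctly flagged this as the main obstacle; just be aware that $r$-boundedness alone does not carry it.
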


In fact, although this was not stated explicitly, 
one obtains that the graph ${\cal G}$ is 
$\delta^\prime$-hyperbolic for a number $\delta^\prime>0$ only
depending on the hyperbolicity constant for ${\cal E\cal G}$,
the common hyperbolicity constant $\delta$ for the subgraphs $H_c$ and
the constants which enter in the bounded penetration
property.

\section{Hyperbolicity of the graph of non-separating curves}

In this section we consider an arbitrary surface $S$ of 
genus $g\geq 2$ with $m\geq 0$ punctures. 
Let ${\cal C\cal G}$ be the curve graph of 
$S$ and let ${\cal N\cal C}(S,1)$ be the complete
subgraph of ${\cal C\cal G}$ 
whose vertex set consists of non-separating curves.
The goal of this section is to show

\begin{proposition}\label{curves}
The graph ${\cal N \cal C}(S,1)$ is hyperbolic.
\end{proposition}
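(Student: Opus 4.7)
The plan is to split on the number of punctures. When $m \leq 1$, I would show that the inclusion $\iota\colon \mathcal{NC}(S,1)\hookrightarrow \mathcal{CG}$ is a quasi-isometry; since $\mathcal{CG}$ is hyperbolic, so is $\mathcal{NC}(S,1)$. The essential observation is that with at most one puncture, every essential separating simple closed curve on $S$ has both complementary components of positive genus (neither side can be a disk or once-punctured disk without making the curve null-homotopic or peripheral), and hence is disjoint from a non-separating curve on each side. Along a $\mathcal{CG}$-geodesic between non-separating endpoints one can then replace each separating intermediate vertex by a disjoint non-separating one, producing an $\mathcal{NC}(S,1)$-path whose length is controlled linearly by the $\mathcal{CG}$-distance.

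When $m \geq 2$ the inclusion is no longer a quasi-isometry, and I would apply Theorem \ref{hypextension} to $\mathcal{G}=\mathcal{NC}(S,1)$. For the family $\mathcal{H}$ I would take subgraphs indexed by the essential separating curves $c$ on $S$ that bound a punctured disk on one side --- the prototypical examples being the curves around pairs of distinct punctures. For each such $c$, let $H_c$ be the full subgraph of $\mathcal{NC}(S,1)$ spanned by non-separating curves of $S$ disjoint from $c$. Then $H_c$ is canonically the non-separating curve graph of the complementary subsurface, which has the same genus but strictly fewer punctures, so by induction on $m$ each $H_c$ is $\delta$-hyperbolic with a uniform $\delta$. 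This verifies hypothesis~(1) of Theorem \ref{hypextension}.

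For hypothesis~(2), I would show that the $\mathcal{H}$-electrification $\mathcal{EG}$ is quasi-isometric to $\mathcal{CG}$ via the $1$-Lipschitz map sending $v_c \mapsto c$ and fixing $\mathcal{NC}(S,1)$; the image is coarsely dense because every essential separating curve outside the indexing family has both sides of positive genus and is therefore disjoint from a non-separating curve. Hyperbolicity of $\mathcal{EG}$ is then inherited from that of $\mathcal{CG}$. The boundedness of $\mathcal{H}$ in the sense of Definition \ref{rconvex} reduces to checking that non-separating curves simultaneously disjoint from two distinct indexing curves are trapped in a subsurface of controlled topology, which can be arranged by suitably restricting the indexing (for instance so that distinct indexing curves bound disks around distinct subsets of punctures).

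The principal obstacle is the bounded penetration property, hypothesis~(3). I would prove it by interpreting the depth with which an efficient quasi-geodesic in $\mathcal{EG}$ penetrates a vertex $v_c$ as an arc- or subsurface-projection coefficient to the punctured-disk side of $c$, and then invoking the Masur--Minsky style detection principle that two efficient quasi-geodesics in the electrification sharing the same endpoints must witness the same large projections, and consequently penetrate each $v_c$ through nearby entry and exit points in $H_c$. This verification, rather than the hyperbolicity inputs, is where the argument demands genuine work and where the case $m\ge 2$ differs in substance from the case $m\le 1$. Once all three hypotheses are in place, Theorem \ref{hypextension} delivers hyperbolicity of $\mathcal{NC}(S,1)$.
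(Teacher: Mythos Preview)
Your treatment of the case $m\le 1$ is correct and agrees with the paper (Corollary~\ref{onepuncture}).

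For $m\ge 2$ there is a genuine gap: the family $\mathcal{H}$ you propose is \emph{not} bounded in the sense of Definition~\ref{rconvex}, and no restriction of the indexing of the kind you suggest will repair this. The point is that boundedness is measured in the \emph{intrinsic} metric of $H_c$, which by your own identification is $\mathcal{NC}(S_c,1)$ for the thick side $S_c$, not the curve graph of $S_c$. Take two disjoint indexing curves $c_1,c_2$ bounding twice-punctured disks around disjoint pairs of punctures. Inside $H_{c_1}\cong\mathcal{NC}(S_{c_1},1)$ the curve $c_2$ is again a separating curve bounding a punctured disk, and $H_{c_1}\cap H_{c_2}$ is precisely the set of non-separating curves of $S_{c_1}$ lying in the thick subsurface $Z\subset S_{c_1}$ cut off by $c_2$. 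This set has \emph{infinite} diameter in $\mathcal{NC}(S_{c_1},1)$: a pseudo-Anosov supported on $Z$ acts with unbounded orbits there (equivalently, minimal laminations filling $Z$ lie in the Gromov boundary of $\mathcal{NC}(S_{c_1},1)$, cf.\ Corollary~\ref{boundary}). Your parenthetical ``trapped in a subsurface of controlled topology'' would give boundedness in $\mathcal{CG}(S_{c_1})$, but that is not the metric in play. Since the bounded penetration property presupposes $r$-boundedness, hypothesis~(3) of Theorem~\ref{hypextension} cannot even be formulated for your family.

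The paper circumvents exactly this obstruction by \emph{not} jumping directly from $\mathcal{CG}$ to $\mathcal{NC}(S,1)$. It interpolates through a chain of graphs $\mathcal{A}(p)$, $p=1,2,\dots$, all on the same vertex set, with $\mathcal{A}(1)$ quasi-isometric to $\mathcal{CG}$ (Lemma~\ref{inductbegin}) and $\mathcal{A}(p)=\mathcal{NC}(S,1)$ for $p$ large. At the step from $\mathcal{A}(p-1)$ to $\mathcal{A}(p)$ the coned-off regions $H_X$ are indexed by thick subsurfaces $X$ of Euler characteristic $\chi(S)+p-1$, and the key Lemma~\ref{identify} shows $H_X\cong\mathcal{A}(X,1)$, which \emph{is} quasi-isometric to the curve graph $\mathcal{CG}(X)$. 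In that metric, curves confined to a proper subsurface $X\cap Y$ genuinely have bounded diameter, so boundedness (Lemma~\ref{rbounded}) and then bounded penetration go through. In short, the induction has to be arranged so that the coned-off pieces look like curve graphs rather than like smaller copies of $\mathcal{NC}$; your induction on $m$ produces the latter, and that is where it breaks.
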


\noindent
{\bf Example:} If $S$ is a surface of genus $g=1$ 
then any two disjoint non-separating simple closed
curves in $S$ are homotopic after closing the punctures 
and the graph ${\cal N\cal C}(S,1)$  
is not connected.

\bigskip

Define a properly embedded connected
incompressible subsurface $X$ of 
$S$ to be \emph{thick} if the genus of $X$ equals $g$. This
is equivalent to stating that 
each of the boundary circles of $X$ 
is separating
in $S$ and that moreover there is no non-separating simple
closed curve in $S$ which is contained in $S-X$. 
Observe that the only thick subsurface of a surface $S$ with
at most one puncture is $S$ itself.

If $X\subset S$ is thick then each component of $S-X$
is a bordered punctured sphere with connected boundary.
If we collapse each boundary circle of $X$ to
a puncture then we can view $X$ as a surface of finite type whose 
genus equals the genus of $S$. 
In particular, we can look at thick
subsurfaces of $X$. However, thick subsurfaces of $X$
are precisely the
thick subsurfaces of $S$ which are contained in $X$.

For a thick subsurface $X$ of $S$ and for 
$p\geq 1$ define a graph ${\cal A}(X,p)$ as follows.
Vertices of ${\cal A}(X,p)$ are non-separating simple closed curves
in $X$. Two such vertices $c,d$ are connected by an edge
of length one if either they are disjoint or if they
are both contained in a proper thick subsurface $Y$ of $X$ of 
Euler characteristic $\chi(X)+p$. Note that if 
$p\geq  -\chi(X)-2g+2$ then ${\cal A}(X,p)={\cal N\cal C}(X,1)$.

Recall that for a number $L\geq1$ 
two geodesic metric spaces $Y,Z$ are
\emph{$L$-quasi-isometric} if 
there is a map $F:Y\to Z$ so that
\[d(x,y)/L-L\leq d(Fx,Fy)\leq Ld(x,y)+L\,\forall x,y\in Y\]
and that for all $z\in Z$ there is some 
$y\in Y$ with $d(Fy,z)\leq L$.
In general, quasi-isometries are not continuous. 
A map $F:Y\to Z$ is called \emph{coarsely
$L$-Lipschitz} if $d(Fx,Fy)\leq Ld(x,y)+L$ for all 
$x,y\in Y$.

Let ${\cal C\cal G}(X)$ be the curve graph of $X$.

\begin{lemma}\label{inductbegin}
For every thick subsurface $X$ of $S$ 
the vertex inclusion extends to a $2$-quasi-isometry
${\cal A}(X,1)\to {\cal C\cal G}(X)$.
\end{lemma}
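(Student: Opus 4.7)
The assertion that the vertex inclusion $F:{\cal A}(X,1)\to {\cal C\cal G}(X)$ is a $2$-quasi-isometry has three parts: coarse density, an upper Lipschitz bound with constant $2$, and a lower Lipschitz bound with constant $2$. The first two are direct; the real work lies in the third.

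For coarse density, if $\beta$ is a separating essential curve in $X$ then $X\setminus\beta$ has two components whose genera sum to $g(X)\geq 2$, so one of them has positive genus and carries a non-separating curve of $X$ disjoint from $\beta$. For the upper Lipschitz bound, an ${\cal A}(X,1)$-edge either joins two disjoint curves (${\cal C\cal G}(X)$-distance $1$) or two non-separating curves both contained in a proper thick subsurface $Y$ with $\chi(Y)=\chi(X)+1$, in which case both are disjoint from the separating boundary of $Y$ (${\cal C\cal G}(X)$-distance at most $2$). This gives $d_{{\cal C\cal G}(X)}(c,d)\leq 2\,d_{{\cal A}(X,1)}(c,d)$.

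For the lower bound, given a ${\cal C\cal G}(X)$-geodesic $c=c_0,\ldots,c_k=d$ between non-separating endpoints, I would use the sublemma that any two disjoint essential curves $\alpha,\beta$ on $X$ admit a common disjoint non-separating curve of $X$---a short genus calculation, with the borderline case where $X\setminus(\alpha\cup\beta)$ has total genus zero handled directly by checking that a suitable essential curve in the resulting multi-punctured sphere pieces closes up non-separatingly in $X$. Choosing such a non-separating $b_i$ disjoint from $c_i\cup c_{i+1}$ for each $i$ (with $b_{-1}=c_0$ and $b_k=c_k$), it suffices to show $d_{{\cal A}(X,1)}(b_{i-1},b_i)\leq 2$, so that the sequence $c_0,b_0,\ldots,b_{k-1},c_k$ has total ${\cal A}(X,1)$-length at most $2k$.

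The bound on $d_{{\cal A}(X,1)}(b_{i-1},b_i)$ is a case analysis on $c_i$, using that both $b_{i-1}$ and $b_i$ are disjoint from $c_i$. If $c_i$ is non-separating, then $b_{i-1},c_i,b_i$ is already a length-$2$ path in ${\cal A}(X,1)$. If $c_i$ is separating with both complementary components of positive genus, then either $b_{i-1}$ and $b_i$ lie on opposite sides (and are automatically disjoint) or they lie on the same side, in which case a non-separating curve on the other side serves as an intermediate vertex. The remaining sub-cases are where one side $Y'$ of $c_i$ is a sphere with some punctures: when $Y'$ is a pair of pants the other side is itself the required thick subsurface with $\chi=\chi(X)+1$, and when $Y'$ is an $n$-punctured sphere with $n\geq 4$ one cuts off a sub-pair-of-pants $P^*\subset Y'$ around two of the $n-1\geq 3$ punctures of $X$ lying in $Y'$ to produce a thick subsurface $X\setminus P^*$ with $\chi(X\setminus P^*)=\chi(X)+1$ containing both $b_{i-1}$ and $b_i$. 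I expect the main obstacle to be the sublemma, together with the sub-pair-of-pants construction in this last sub-case: verifying that both are available for all thick $X$ with $g\geq 2$ is where the genus and puncture bookkeeping need to be done carefully.
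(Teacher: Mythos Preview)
Your argument is correct and yields the same $2$-quasi-isometry, but it is organized differently from the paper's proof. The paper first replaces every vertex of the ${\cal C\cal G}(X)$-geodesic by an \emph{admissible} curve (non-separating, or separating with a pair of pants on one side), doing even and odd indices separately and using the geodesic property to ensure that $\gamma(2i\pm1)$ lie on the same side of $\gamma(2i)$; it then inserts a non-separating curve between any two consecutive separating admissible vertices and finally erases the separating vertices. Your approach instead attaches a non-separating $b_i$ to each \emph{edge} $c_ic_{i+1}$ and bounds $d_{{\cal A}(X,1)}(b_{i-1},b_i)$ by a case analysis on the single middle vertex $c_i$. One pleasant feature of your route is that it never invokes the geodesic hypothesis: it shows that any ${\cal C\cal G}(X)$-path of length $k$ between non-separating endpoints can be replaced by an ${\cal A}(X,1)$-path of length at most $2k+2$ (or $2k$, once you observe that the first and last steps $c_0\to b_0$ and $b_{k-1}\to c_k$ have length $\leq 1$ because $c_0,c_k$ are themselves non-separating). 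The paper's route, on the other hand, isolates the class of admissible separating curves, and this is exactly the class that reappears later as the vertices of the \emph{canonical modification} of a path in ${\cal A}(S,1)$; so the paper's organization foreshadows that construction.

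Two small remarks on your write-up. First, your ``borderline case'' in the sublemma (total genus zero in $X\setminus(\alpha\cup\beta)$) only occurs when both $\alpha$ and $\beta$ are non-separating, and then you may simply take $b_i=\alpha$; there is no need to hunt for a third curve. Second, in your final sub-case ($Y'$ an $n$-holed sphere with $n\geq4$) you should note explicitly that $b_{i-1}$ and $b_i$ are forced to lie in the genus-$g$ side $Z$ of $c_i$, since any simple closed curve contained in the planar piece $Y'$ is separating in $X$; this is what makes $X\setminus P^*\supset Z$ contain both of them.
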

\begin{proof} Since two simple closed curves which 
are contained in a proper thick subsurface $Y$ of $X$
are disjoint from a boundary circle of $Y$ which is essential in $X$,
the vertex inclusion extends to a coarsely $2$-Lipschitz map 
${\cal A}(X,1)\to {\cal C\cal G}(X)$.
Thus it suffices to show that the distance
in ${\cal A}(X,1)$ between any two non-separating 
simple closed curves
does not exceed twice their distance in ${\cal C\cal G}(X)$.

To this end let $\gamma:[0,n]\to {\cal C\cal G}(X)$ be a simplicial geodesic
connecting two non-separating simple closed 
curves $\gamma(0),\gamma(n)$.  
We construct first a simplicial 
geodesic $\tilde \gamma$ in ${\cal C\cal G}(X)$ with 
the same endpoints such that for each $i$, the curve
$\tilde \gamma(i)$ either is non-separating or 
it decomposes $X$ into a thick subsurface 
of Euler characteristic $\chi(X)+1$ and a 
three-holed sphere. Call such a simple closed curve
(with either
of these two properties) \emph{admissible}
in the sequel.

For the construction of $\tilde \gamma$ replace 
first each of the vertices $\gamma(2i)$ with even 
parameter $0<2i<n$ by an
admissible curve.  
Namely, if $\gamma(2i)$ is not admissible
then $\gamma(2i)$ decomposes 
$X$ into two surfaces $X_1,X_2$
which are different from three holed spheres.

If $\gamma(2i-1),\gamma(2i+1)$ are contained in distinct
components of $S-\gamma(2i)$ then they are disjoint and hence
they are connected in ${\cal C\cal G}(X)$ by an edge.
This implies that we can shorten $\gamma$ with fixed endpoints.
Since $\gamma$ is length minimizing this is impossible.

Thus  
$\gamma(2i-1),\gamma(2i+1)$ are contained in the same component of 
$X-\gamma(2i)$, say in $X_1$. Then
$X_2=X-X_1$ either has positive genus and hence contains a non-separating
curve, or it is a sphere with at least four
holes and contains an admissible separating curve. Thus there is an admissible 
curve $\tilde \gamma(2i)\subset X_2$, and this curve is disjoint from
$\gamma(2i-1)\cup \gamma(2i+1)$. Replace $\gamma(2i)$ by
$\tilde \gamma(2i)$.
This process leaves the points $\gamma(2i+1)$ 
with odd parameter unchanged. 

In a second step, replace 
with the same construction each of 
the points $\gamma(2i+1)$ with odd parameter by an admissible curve. 
Let $\tilde \gamma:[0,n]\to {\cal C\cal G}(X)$ be the resulting
simplicial geodesic. The image of every vertex is admissible. 

The geodesic $\tilde \gamma$ is now modified as follows.
Replace each edge 
$\tilde \gamma[i,i+1]$ connecting two separating admissible simple closed
curves $\tilde \gamma(i),\tilde \gamma(i+1)$ 
by an edge path in ${\cal C\cal G}(X)$ 
of length $2$ with the same endpoints so that
the middle vertex is a non-separating simple closed curve.
This is possible because if 
$c_1,c_2$ are two disjoint separating 
admissible curves then $c_1\cup c_2$ is disjoint
from some non-separating simple closed curve in $X$. 
The length of the resulting path $\hat \gamma$ is at most
twice the length of $\gamma$. 

The path $\hat\gamma$ can be viewed as a path in ${\cal A}(X,1)$ by
simply erasing all vertices which are 
separating admissible simple closed curves.
Namely, each such vertex $v$ is the boundary circle of a thick
subsurface $Y$ of $X$ of Euler characteristic $\chi(X)+1$.
The two adjacent vertices are  
non-separating simple
closed curves contained in $Y$.
Thus by the definition of ${\cal A}(X,1)$, 
these curves are connected in ${\cal A}(X,1)$ by an edge.
This shows that the endpoints of $\gamma$ are connected in 
${\cal A}(X,1)$ by a path whose length does not exceed twice the distance 
in ${\cal C\cal G}(X)$ between the endpoints.
\end{proof}

Since a surface with at most one puncture does not
admit any proper thick subsurface we obtain as an 
immediate corollary

\begin{corollary}\label{onepuncture}
If $S$ has at most one puncture then
the inclusion ${\cal N\cal C}(S,1)\to 
{\cal C\cal G}$ is a $1$-quasi-isometry.
\end{corollary}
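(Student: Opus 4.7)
The plan is to deduce the corollary from Lemma~\ref{inductbegin} applied with $X = S$, observing that the multiplicative factor $2$ in the lemma collapses to $1$ under the one-puncture hypothesis. First I would note that since a surface $S$ with at most one puncture admits no proper thick subsurface (a component separated off by a single simple closed curve together with at most one puncture cannot contain all the genus), the ``contained in a proper thick subsurface'' clause in the definition of ${\cal A}(S,1)$ is vacuous. Hence ${\cal A}(S,1)$ and ${\cal N\cal C}(S,1)$ coincide as graphs, and the inclusion into ${\cal C\cal G}$ is trivially $1$-Lipschitz.

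For the opposite inequality I would revisit the proof of Lemma~\ref{inductbegin} in the present setting. The notion of ``admissible curve'' now reduces to ``non-separating'', because the alternative option (bounding a thick subsurface of Euler characteristic $\chi(S)+1$ together with a three-holed sphere) again requires a proper thick subsurface, which does not exist. Consequently the even/odd replacement construction from the lemma directly produces a geodesic in ${\cal C\cal G}$ of the same length as the original $\gamma$ whose vertices are all non-separating, and the subsequent edge-doubling step---the sole source of the multiplicative factor $2$ in the lemma---is never triggered. The one point to verify is that each separating vertex $\gamma(i)$ can in fact be replaced by a non-separating curve disjoint from its two neighbors: if $\gamma(i)$ divides $S$ into $X_1, X_2$, then under $m \leq 1$ each of $X_1, X_2$ must have positive genus, since otherwise a component would be a sphere with one boundary circle and at most one puncture, making $\gamma(i)$ inessential. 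This guarantees a non-separating curve of $S$ inside whichever complementary component does not contain $\gamma(i-1), \gamma(i+1)$.

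The conclusion is $d_{{\cal N\cal C}(S,1)}(c,c') = d_{{\cal C\cal G}}(c,c')$ for any two non-separating curves $c,c'$, and coarse surjectivity is automatic since every essential separating curve is disjoint from some non-separating curve by the same positive-genus argument. Altogether the inclusion is a $1$-quasi-isometry. I do not anticipate any genuine obstacle: the entire argument is a bookkeeping simplification of the lemma once the one-puncture hypothesis is unpacked at the key replacement step.
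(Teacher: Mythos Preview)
Your proposal is correct and follows exactly the approach the paper intends: the paper derives the corollary immediately from the observation that a surface with at most one puncture has no proper thick subsurface, and you have simply unpacked why this observation collapses the multiplicative factor~$2$ in Lemma~\ref{inductbegin} to~$1$ (no separating admissible curves exist, so the edge-doubling step is never invoked). Your verification that both complementary pieces of a separating curve have positive genus is the one detail the paper leaves implicit, and you have handled it correctly.
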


Write ${\cal A}(p)={\cal A}(S,p)$.
Our goal is to use Lemma \ref{inductbegin} and induction on $p$
to show that ${\cal A}(p)$ is hyperbolic for all $p$.
Since ${\cal A}(p)={\cal N\cal C}(S,1)$ for $p\geq-\chi(S)-2g+2$,
this then shows Proposition \ref{curves}.

Let now $p-1\geq 1$ and let $X$ be a thick subsurface
of $S$ such that $\chi(X)=\chi(S)+p-1$.
Let $H_{X}$ be the complete subgraph of 
${\cal A}(p)$ whose vertex set consists
of all non-separating simple closed curves
which are contained in $X$.
Let ${\cal H}=\{H_X\mid X\}$. 
Our goal is to apply 
Theorem \ref{hypextension}.
to the graph ${\cal A}(p)$ and its ${\cal H}$-electrification.
The next easy observation
is the basic setup for the induction step.

\begin{lemma}\label{helec}
${\cal A}(p-1)$ is $2$-quasi-isometric to the
${\cal H}$-electrification of ${\cal A}(p)$. 
\end{lemma}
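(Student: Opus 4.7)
The plan is to prove that the vertex inclusion
$F\colon {\cal A}(p-1)\to {\cal E\cal A}(p)$
is a $2$-quasi-isometry, where ${\cal E\cal A}(p)$ denotes the
${\cal H}$-electrification of ${\cal A}(p)$. Both graphs share the
vertex set of non-separating simple closed curves on $S$; each
extra vertex $v_X$ of ${\cal E\cal A}(p)$ is adjacent to every
curve in $H_X$, so it lies within distance $1$ of the image of $F$.

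For the upper bound
$d_{{\cal E\cal A}(p)}(Fc,Fd)\le 2\,d_{{\cal A}(p-1)}(c,d)$
I would check that each edge of ${\cal A}(p-1)$ is realized by a
path of length at most $2$ in ${\cal E\cal A}(p)$. Two adjacent
vertices $c,d$ in ${\cal A}(p-1)$ are either disjoint (hence
adjacent in ${\cal A}(p)\subset {\cal E\cal A}(p)$) or both lie in
some proper thick subsurface $Y$ with $\chi(Y)=\chi(S)+p-1$; in the
latter case $Y$ indexes $H_Y$ and the two-edge path from $c$ to
$d$ through $v_Y$ has length $2$. For the lower bound I would take
a simplicial geodesic $\gamma$ in ${\cal E\cal A}(p)$ and rewrite
it edge-by-edge as a path in ${\cal A}(p-1)$. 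Whenever $\gamma$
passes through a vertex $v_Y$, the two neighbouring vertices lie in
$H_Y$, hence both lie in the thick subsurface $Y$ with
$\chi(Y)=\chi(S)+p-1$, so they are adjacent in ${\cal A}(p-1)$ and
the length-two transit collapses to a single edge. An edge of
${\cal A}(p)$ between curves $c',d'$ is immediate when $c',d'$
are disjoint; otherwise $c',d'$ share a proper thick subsurface
$Z$ with $\chi(Z)=\chi(S)+p$, and one must produce a proper thick
$Y\supset Z$ with $\chi(Y)=\chi(S)+p-1$ to witness their adjacency
in ${\cal A}(p-1)$.

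The main obstacle is this last existence statement. The complement
$S-Z$ is a disjoint union of connected planar pieces
$P_1,\dots,P_k$, each with a single boundary circle and with
$n_i\ge 2$ punctures (thickness of $Z$ forces its boundary circles
to be essential in $S$), satisfying $\sum_i(n_i-1)=p$. If some
$P_i$ has $n_i\ge 3$, I would cut off a sub-annulus $A\subset P_i$
with exactly one puncture (so $\chi(A)=-1$) and set $Y=Z\cup A$;
the leftover $P_i-A$ is a disk with $n_i-1\ge 2$ punctures, so it
remains a nontrivial component of $S-Y$ and $Y$ is proper.
Otherwise every $P_i$ has $n_i=2$, whence $k=p\ge 2$, and
$Y=Z\cup P_1$ works with $P_2,\dots,P_k$ left in $S-Y$. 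In either
case $Y$ inherits thickness from $Z$: the attachment is planar so
the genus is preserved, and the new boundary circle (either the
separating curve bounding $A$ or the image of $\partial P_1$
absorbed into $Z$) is separating in $S$. The hypothesis
$p-1\ge 1$ is used precisely to guarantee that $Y$ does not
exhaust the ambient surface $S$.
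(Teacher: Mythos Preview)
Your argument is correct and follows the same route as the paper: the vertex inclusion ${\cal A}(p-1)\to{\cal E}$ is shown to be coarsely $2$-Lipschitz in the same way, and the reverse inequality comes from collapsing electrification paths back into ${\cal A}(p-1)$. The paper handles the lower bound by simply asserting that ${\cal A}(p)$ is obtained from ${\cal A}(p-1)$ by deleting some edges; your explicit construction of the intermediate thick subsurface $Y\supset Z$ with $\chi(Y)=\chi(S)+p-1$ supplies the justification the paper leaves implicit, and your case split on whether some complementary piece has at least three punctures is a clean way to do it.
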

\begin{proof} Let ${\cal E}$ be the
${\cal H}$-electrification of ${\cal A}(p)$. 
Let $c,d$ be any two simple closed curves which are connected
in ${\cal A}(p-1)$ by an edge. Then either $c,d$ are disjoint
and hence connected in ${\cal A}(p)$ by an edge, or 
$c,d$ are contained in a thick subsurface $X$ of $S$ of 
Euler characteristic $\chi(X)=\chi(S)+p-1$. Thus
$c,d$ are vertices in $H_X$ and hence  
the distance between $c,d$ in 
${\cal E}$ is at most two. This shows that
the vertex inclusion ${\cal A}(p-1)\to {\cal E}$ is 
two-Lipschitz. 

That this is in fact a 2-quasi-isometry follows 
from the observation that ${\cal A}(p)$ is obtained
from ${\cal A}(p-1)$ by deleting some edges. Moreover, 
the endpoints of an embedded
simplicial path in ${\cal E}$ of length 2 whose 
midpoint is a special vertex not contained in ${\cal A}(p)$
are non-separating simple closed curves 
which are contained in 
a thick subsurface $X$ of $S$ of Euler characteristic 
$\chi(S)+p-1$ and henc they 
are connected by an edge in ${\cal A}(p-1)$.
\end{proof}

Our goal is now to check that the family 
${\cal H}=\{H_X\mid X\}$ has the
properties stated in Theorem \ref{hypextension}.
The following lemma together with Lemma \ref{inductbegin} 
implies that the graphs $H_X$ are
$\delta$-hyperbolic for a universal constant $\delta >0$.

\begin{lemma}\label{identify} 
$H_{X}$ is isometric to ${\cal A}(X,1)$.
\end{lemma}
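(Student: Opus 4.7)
Plan:

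Both graphs have the same vertex set — the non-separating simple closed curves in $X$, equivalently the non-separating simple closed curves of $S$ lying in $X$ (since $X$ has the same genus as $S$ and every boundary component of $X$ is separating in $S$). The edge condition in each graph is of the same form, targeting the Euler characteristic $\chi(X)+1 = \chi(S)+p$. By the earlier observation that thick subsurfaces of $X$ are exactly the thick subsurfaces of $S$ contained in $X$, any edge of $\mathcal{A}(X,1)$ is also an edge of $H_X$ via the same witness $Y \subsetneq X$, and the disjointness edges coincide tautologically. Thus $\mathcal{A}(X,1)$ sits inside $H_X$ as a spanning subgraph under the identity on vertices, and the lemma reduces to showing the reverse inclusion of edges, which will yield equality of graphs and hence an isometry.

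For that, suppose $c, d \subset X$ are both contained in a proper thick subsurface $Y' \subsetneq S$ with $\chi(Y') = \chi(X)+1$. I aim to produce a thick subsurface $Y \subsetneq S$ contained in $X$ with the same Euler characteristic, still containing $c \cup d$; by the earlier observation $Y$ is then thick in $X$ and witnesses the desired edge in $\mathcal{A}(X,1)$. Writing $S \setminus X = \bigsqcup_k P_k$ for the complementary bordered punctured spheres of $X$ in $S$ (each automatically disjoint from $c \cup d$, with $\sum_k(n_{P_k}-1) = p-1$), the desired $Y$ corresponds to removing the $P_k$'s from $S$ together with one further essential bordered punctured sphere in $X$ disjoint from $c \cup d$ contributing $1$ to the Euler count — that is, a $3$-holed sphere $R \subset X$ bounded by an essential separating curve in $\mathrm{int}(X)$ and containing two punctures of $X$.

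To exhibit this $R$ I use $Y'$. Isotope $\partial Y'$ into minimal position with $\partial X$ and resolve each intersection arc by a standard band-sum surgery: each such surgery replaces a crossing pair of separating curves by disjoint separating curves, preserving disjointness from $c \cup d$ (which is disjoint from both $\partial Y'$ and $\partial X$) and the total Euler characteristic contribution of the associated sphere pieces. After finitely many surgeries one obtains a thick subsurface $Y'' \subsetneq S$ with $\chi(Y'') = \chi(Y') = \chi(X)+1$, still containing $c, d$, and with $\partial Y''$ disjoint from $\partial X$. Since $\chi(Y'') > \chi(X)$ forbids $Y'' \supset X$, at least one component of $\partial Y''$ lies in $\mathrm{int}(X)$ and bounds, on its sphere side inside $X$, a bordered punctured sphere with at least two $X$-punctures; carving a $3$-holed sphere from it produces the required $R$, and $Y := S \setminus (\bigsqcup_k P_k \cup R)$ is the sought witness.

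The main technical obstacle is the surgery step: verifying that each band-sum resolution preserves separating-ness of the curves, disjointness from $c \cup d$, and the total Euler characteristic contribution of the complementary sphere system. Once this is in place, the identification of thick subsurfaces of $X$ with thick subsurfaces of $S$ contained in $X$ and straightforward Euler arithmetic finish the proof.
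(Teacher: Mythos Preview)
Your reduction matches the paper's: the vertex sets coincide, the inclusion of $\mathcal{A}(X,1)$-edges into $H_X$-edges is immediate from the identification of thick subsurfaces of $X$ with thick subsurfaces of $S$ contained in $X$, and the content of the lemma is the reverse inclusion --- given $c,d\subset X$ lying in some thick $Y'\subset S$ with $\chi(Y')=\chi(X)+1$, produce a thick $Y\subset X$ of the same Euler characteristic still containing $c\cup d$. But your mechanism for producing $Y$ has a genuine gap at the surgery step. The ``band-sum'' you invoke --- resolving a crossing of two separating curves into two disjoint separating curves while preserving the total Euler characteristic of the complementary sphere system --- is not a standard move, and the natural candidates fail to have this property: compressing a complementary punctured disk $Q_j$ along an outermost arc of $\partial X\cap Q_j$ does keep the pieces planar with connected boundary, but it splits $Q_j$ in two and hence \emph{raises} $\chi(S\setminus Y')$ by~$1$, lowering $\chi(Y')$; tubing two disks together does the opposite; and a single-crossing smoothing of two separating curves need not return separating (or even embedded) curves. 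The clause ``preserving \ldots the total Euler characteristic contribution of the associated sphere pieces'' is precisely the assertion that needs proof, and you have not supplied one. (Your post-surgery step --- finding a boundary component of $Y''$ that is non-peripheral in $X$ --- can in fact be justified by an Euler-characteristic count, so that part is salvageable once the surgery is.)

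The paper avoids curve surgery altogether by passing to a combinatorial model: a thick subsurface of Euler characteristic $\chi(S)+\ell$ is the complement of a regular neighborhood of an embedded forest in $S$ with $\ell$ edges whose vertices are punctures. Thus $X$ corresponds to a forest $G$ with $p-1$ edges and $Y'$ to a forest $G_0$ with $p$ edges, both automatically disjoint from $c\cup d$. Since $G_0$ has strictly more edges, one finds an edge $e$ of $G_0$ reaching a puncture outside a given component of $G$; following $e$ to its first meeting with $G$ and then sliding along $G$ to a puncture produces an embedded arc $\hat e$ with interior disjoint from $G$, still disjoint from $c\cup d$. Then $G\cup\hat e$ is a forest with $p$ edges whose complement is the required thick $Y\subset X$. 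This one-line graph-theoretic construction replaces your surgery paragraph entirely and makes the Euler-characteristic bookkeeping automatic.
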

\begin{proof} Let $X$ be a thick subsurface of $S$ 
of Euler characteristic
$\chi(S)+p-1$.  
If $c_1,c_2$ are two non-separating simple closed curves 
contained in $X$ then $c_1,c_2$ 
are connected in ${\cal A}(p)$ by an edge if
either $c_1,c_2$ are disjoint or if $c_1,c_2$ are contained in 
a thick subsurface $X_0$ of $S$ of Euler characteristic
$\chi(X_0)=\chi(S)+p=\chi(X)+1$. 

Now the thick subsurface $X_0$ can be chosen to be
contained in $X$. Namely, any thick subsurface of $S$ 
of Euler characteristic $\chi(S)+\ell$ $(\ell\geq 0)$ can
be described as the complement in $S$ of 
a small neighborhood of 
an embedded forest in 
$S$ (i.e. an embedded possibly
disconnected graph with no cycles) with $\ell$ edges 
whose vertices are the punctures of $S$. 

Assume for the moment 
that there is a forest $G$ defining $X$ which is the union of 
a connected component $\hat G$ and isolated points. 
Then $\hat G$ has precisely $p$ vertices and $p-1$ edges
where $p=\chi(X)-\chi(S)$.
Since the graph $G_0$ defining $X_0$ has $p$ edges, 
there is at least one puncture $x$
of $S$ which is not contained in $\hat G$ and which 
is the endpoint of an edge $e$ of  $G_0$.
If up to homotopy with fixed endpoints,
$e$ intersects $\hat G$ at most at 
the second endpoint
then the complement of a small neighborhood of 
$\hat G\cup e$ is a thick subsurface $Y$ of $X$
of Euler characteristic $\chi(S)+p$ 
which contains $c_1\cup c_2$. This is  
what we wanted to show.

If $e$ intersects $\hat G$ in an interior point which can
not be removed with a homotopy of $e$ with fixed endpoitns, 
let $e_0$ be the subarc 
of $e$ with endpoints $x$ and the first intersection point with 
$\hat G$. Concatenation of $e_0$ with 
a subarc of an edge of 
$\hat G$ and modification of the resulting arc with a 
small homotopy with fixed endpoints 
yields an embedded are $\hat e$ in $S$ whose interior is 
disjoint from the interior of 
$\hat G$ so that $\hat G\cup \hat e$
defines a thick subsurface $Y$ of $X$ of Euler characteristic
$\chi(S)+p$ disjoint from $c_1\cup c_2$.

The general case is treated in the same way. Namely, 
there is at least one edge $e$ of $G_0$ which
connects two distinct connected components of $G$, and the
argument above can be applied to the edge $e$.

As a consequence, 
$c_1,c_2$ are connected
by an edge in ${\cal A}(X,1)$ which is what we wanted to show.
\end{proof}

\begin{lemma}\label{rbounded}
The family of subgraphs $H_{X}$ of ${\cal A}(p)$ 
where $X$ runs through the thick subsurfaces of $S$ 
of Euler characteristic $\chi(S)+p-1$ 
is bounded. 
\end{lemma}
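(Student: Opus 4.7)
The plan is to find an absolute constant $r$ such that $\operatorname{diam}_{H_X}(H_X\cap H_Y)\le r$ for all distinct thick subsurfaces $X\ne Y$ of $S$ with $\chi(X)=\chi(Y)=\chi(S)+p-1$. By Lemma \ref{identify} and Lemma \ref{inductbegin}, the vertex inclusion $H_X\to {\cal C\cal G}(X)$ is a $2$-quasi-isometry, so it suffices to bound the diameter of $H_X\cap H_Y$ in the curve graph ${\cal C\cal G}(X)$ by an absolute constant.

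Realize $X=S\setminus N(G_X)$ and $Y=S\setminus N(G_Y)$ for embedded forests $G_X,G_Y$ with vertices at punctures of $S$, each with $p-1$ edges, placed in general position. Every vertex $c$ of $H_X\cap H_Y$ is a non-separating simple closed curve of $S$ contained in $X\cap Y$, and therefore disjoint from $G_Y$, and in particular from any regular neighbourhood $N(G_Y\cap X)$ taken in $X$. The strategy is to extract a single essential simple closed curve $\gamma\subset X$ as a component of the frontier of $N(G_Y\cap X)$ in the interior of $X$. Once such a $\gamma$ is produced, every $c\in H_X\cap H_Y$ satisfies $d_{{\cal C\cal G}(X)}(c,\gamma)\le 1$, so the ${\cal C\cal G}(X)$-diameter of $H_X\cap H_Y$ is at most $2$, and the quasi-isometry with $H_X$ converts this into an absolute upper bound on $\operatorname{diam}_{H_X}(H_X\cap H_Y)$.

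The technical step is producing the essential $\gamma$, and I would do this by contradiction. Suppose every component of the frontier of $N(G_Y\cap X)$ in the interior of $X$ were inessential, i.e. bounded a disk, once-punctured disk, or boundary-parallel annulus of $X$. Then every component of $N(G_Y\cap X)$ would be topologically trivial in $X$, so each arc of $G_Y\cap X$ could be isotoped in $X$, rel its endpoints, into $\partial X\cup\{\text{punctures of }X\}\subset N(G_X)$. Assembling these isotopies across the shared vertices of $G_Y$ shows that $G_Y$ is isotopic in $S$ into $N(G_X)$, whence $Y=S\setminus N(G_Y)\supset S\setminus N(G_X)=X$ up to isotopy. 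Since $\chi(Y)=\chi(X)$, this forces $X=Y$, contradicting the hypothesis. The main obstacle is therefore this last topological argument: it requires a careful case analysis of whether arcs of $G_Y\cap X$ end on $\partial X$ or at punctures of $X$, and how $G_X$ and $G_Y$ may share vertices, in order to pass from "every frontier component is inessential in $X$" to "$G_Y$ is isotopic into $N(G_X)$". The geometric content is straightforward but the bookkeeping is the one delicate ingredient.
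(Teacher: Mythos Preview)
Your overall strategy coincides with the paper's: reduce via Lemma~\ref{identify} and Lemma~\ref{inductbegin} to bounding the diameter of $H_X\cap H_Y$ in ${\cal C\cal G}(X)$, then exhibit a single essential curve in $X$ disjoint from every vertex of $H_X\cap H_Y$. The paper's execution of this second step is much shorter than yours: it simply observes that when $X\neq Y$ are put in minimal position, $X\cap Y$ is (up to isotopy) a proper incompressible subsurface of $X$, and then invokes the standard fact that the set of curves contained in any proper essential subsurface has uniformly bounded diameter in the ambient curve graph. No forests are needed.

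Your forest argument has the right shape but contains a technical slip that you should be aware of. Since the vertices of $G_X$ and $G_Y$ are \emph{all} the punctures of $S$, the surface $X=S\setminus N(G_X)$ contains no punctures of $S$ at all; consequently $G_Y\cap X$ is a disjoint union of properly embedded arcs with both endpoints on $\partial X$. The frontier of $N(G_Y\cap X)$ in $\operatorname{int}(X)$ therefore consists of \emph{arcs}, not simple closed curves, so your candidate $\gamma$ does not exist as stated. The fix is to take instead the frontier of $N\bigl((G_Y\cap X)\cup\partial X\bigr)$ in $X$, or equivalently to look directly at $\partial(X\cap Y)$; these are genuine simple closed curves, and your contradiction argument (all such curves inessential $\Rightarrow$ $X\cap Y$ isotopic to $X$ $\Rightarrow$ $X=Y$) then goes through. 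But at that point you have essentially rederived the paper's one-line observation that $X\cap Y\subsetneq X$, so the forest description buys nothing and the bookkeeping you flag as ``delicate'' is avoidable.
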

\begin{proof} Let $X,Y$ be two thick subsurfaces of $S$ of 
Euler characteristic $\chi(S)+p-1$. 
If $X\not=Y$ then up to homotopy, 
$X\cap Y$ is a (possibly disconnected) subsurface of $X$
whose Euler characteristic is strictly bigger than the
Euler characteristic of $X$. In particular, the diameter
in the curve graph of $X$ of the
set of simple closed curves contained in 
$X\cap Y$ is uniformly bounded.
Thus the lemma follows 
from Lemma \ref{inductbegin} and Lemma \ref{identify}.
\end{proof}

The proof of the bounded penetration property is more involved.
To this end recall from \cite{MM00} 
that for every proper connected subsurface $X$ of $S$
there is a \emph{subsurface projection} $\pi_X$ of 
${\cal C\cal G}$ into the subsets of the arc and curve graph
of $X$. This projection   
associates to a simple closed curve $c$ in $S$ which is not disjoint
from $X$ 
the intersection components $\pi_X(c)$ 
of $c$ with $X$, viewed as 
a subset of the arc and curve graph of $X$. The diameter
of the image is at most one.
If $c$ is disjoint from $X$ then this projection is empty.
The arc and curve graph of $X$ is $2$-quasi-isometric to the curve
graph of $X$ (see \cite{MM00}).

Recall that every vertex of any of the graphs ${\cal A}(p)$ 
$(p\geq 1)$ is a non-separating simple closed curve in $S$.
By definition of a thick subsurface of $S$, 
for any such curve $c$ and every thick subsurface $Z$ of $S$ we have 
$\pi_Z(c)\not=\emptyset$. This fact will be used throughout
the remainder of this section.

We need the following result from \cite{MM00} (in the 
version formulated in Lemma 6.5 of \cite{H13}).

\begin{proposition}\label{projection}
For every number $L>1$ there is a number $\xi(L)>0$ with the
following property. Let $Y$ be a proper connected subsurface of $S$
and let $\gamma$ be a simplicial path
in ${\cal C\cal G}$ which is an $L$-quasi-geodesic.
If $\pi_Y(v)\not=\emptyset$ for every vertex $v$ on $\gamma$ then
\[{\rm diam} \pi_Y(\gamma)<\xi(L).\]
\end{proposition}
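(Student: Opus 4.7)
The statement is the bounded geodesic image theorem of Masur--Minsky, extended from geodesics to $L$-quasi-geodesics; this is how I would attack it in two steps.

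First, I would handle the case $L=1$, i.e. when $\gamma$ is a simplicial geodesic in ${\cal C\cal G}$. The proof uses the Masur--Minsky hierarchy machinery. Given a geodesic $\gamma$ in ${\cal C\cal G}$ with endpoints $c_0,c_n$, build a tight hierarchy $H$ whose main geodesic is (close to) $\gamma$. If the diameter of $\pi_Y(\gamma)$ is larger than some threshold $M_0$ depending only on $\delta$, then the subsurface $Y$ must be a \emph{component domain} of the hierarchy, supporting its own geodesic in ${\cal C\cal G}(Y)$. A standard feature of hierarchies then guarantees that $Y$ appears as a subsurface that is \emph{disjoint} from some vertex of the main geodesic, contradicting the assumption $\pi_Y(v)\not=\emptyset$ for every vertex $v$ of $\gamma$. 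This gives a universal constant $\xi(1)=M_0$ depending only on the hyperbolicity constant $\delta$ of the curve graph, not on the topological type of $S$.

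Second, I would bootstrap to the $L$-quasi-geodesic case using the Morse stability of quasi-geodesics in the $\delta$-hyperbolic space ${\cal C\cal G}$. There is a constant $D=D(L,\delta)$ such that any simplicial $L$-quasi-geodesic $\gamma$ lies within Hausdorff distance $D$ of a geodesic $\beta$ in ${\cal C\cal G}$ sharing its endpoints. The key auxiliary fact, which I would invoke from \cite{MM00}, is that the subsurface projection $\pi_Y$ is coarsely Lipschitz on disjoint pairs: if $c,d$ are disjoint simple closed curves both hitting $Y$ essentially, then $d_Y(\pi_Y(c),\pi_Y(d))\leq 2$. By iterating this along a path in ${\cal C\cal G}$ joining a vertex of $\gamma$ to its $D$-close companion on $\beta$, one obtains that $\pi_Y$ changes by at most $2D$ along such a detour, \emph{provided all intermediate curves meet $Y$}.

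The main obstacle is exactly this last proviso: a vertex of the approximating geodesic $\beta$ may be disjoint from $Y$, so one cannot naively project $\beta$ and apply the geodesic case. The way around this is to work with the entrance/exit vertices into the $1$-neighborhood of $\partial Y$: split $\gamma$ into maximal subpaths whose every vertex on $\beta$ (within the $D$-fellow-travel window) still intersects $Y$, and handle the complementary segments using that curves disjoint from $Y$ are all collapsed together in the arc-and-curve graph. On each good subpath, step one gives a uniform bound on $\text{diam}\,\pi_Y$; on the bad segments, the projection is controlled because the endpoints of the segment are disjoint from or close to $\partial Y$. Combining these estimates yields a single constant $\xi(L)$ depending only on $L$ and $\delta$, as required.
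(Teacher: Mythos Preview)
The paper does not supply its own proof of this proposition: it is quoted as a known result from \cite{MM00}, in the quasi-geodesic formulation of Lemma~6.5 of \cite{H13}. So there is no proof in the paper to compare your argument against.

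Your two-step outline is the standard route, and step~1 is a correct summary of the Masur--Minsky argument. In step~2 you identify the genuine obstacle (vertices of the approximating geodesic $\beta$ may miss $Y$), but your stated mechanism for handling the bad segments is slightly off: saying that ``the endpoints of the segment are disjoint from or close to $\partial Y$'' does not by itself bound their $\pi_Y$-image, since proximity to $\partial Y$ in ${\cal C\cal G}$ gives no control on the projection into $Y$. The argument that actually works is that the set $A$ of curves disjoint from $Y$ has diameter at most~$2$ in ${\cal C\cal G}$, so the geodesic $\beta$ meets $A$ along a single window of length at most~$2$; by Morse stability the corresponding window on $\gamma$ has length bounded in terms of $L$ and $\delta$; and since \emph{every} vertex of $\gamma$ hits $Y$ by hypothesis, the coarse Lipschitz property of $\pi_Y$ bounds the projection change across that window by a constant times its length. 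Outside the window one applies step~1 to the sub-geodesics of $\beta$, which now avoid $A$. This is precisely the argument the paper itself writes out later, when verifying property~(c) in the inductive proof of Proposition~\ref{curves}, explicitly invoking the proof of Lemma~6.5 of \cite{H13}; so your sketch is morally aligned with what the cited reference does, once the control on the bad window is phrased correctly.
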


If $\gamma:[0,n]\to {\cal A}(S,1)$ is any geodesic then for all $j$,
the curves $\gamma(j)$ and 
$\gamma(j+1)$ either are disjoint and hence connected
in ${\cal C\cal G}$ by an edge, or they are 
contained in a common thick subsurface $Y$ 
of $S$ of Euler characteristic
$\chi(S)+1$. In the second case replace the edge 
$\gamma[j,j+1]$  by an edge path in ${\cal C\cal G}$ of length two
connecting the same endpoints which passes through an 
essential simple closed curve in
the complement of $Y$. We call $\tilde \gamma$ a 
\emph{canonical modification}
of $ \gamma$. By Lemma \ref{inductbegin} and its proof,
$\tilde\gamma$ is a simplicial path
in ${\cal C\cal G}$ which is a $2$-quasi-geodesic.

We now define a family of geodesics in ${\cal A}(S,1)$ which
serve as substitutes for the tight geodesics
as introduced in \cite{MM00}.
Namely, for numbers $\kappa >0,p\geq 1$ define a simplicial 
path $\zeta:[0,k]\to {\cal A}(S,1)$ to be 
\emph{($\kappa$,p)-good}
if the following holds true. Let $X\subset S$ be
any thick subsurface of Euler characteristic
$\chi(X)\geq \chi(S)+p$; then there is a number
$u=u(X)\in [0,k)$ with the following property.
\begin{enumerate}
\item For every $j\leq u$,
${\rm diam}(\pi_X(\zeta(0)\cup\zeta(j)))\leq \kappa.$
\item For every $j>u$,
${\rm diam}(\pi_X(\zeta(j)\cup \zeta(k)))\leq \kappa.$
\end{enumerate}
Thus in a good simplicial path, big subsurface projections
can be explicitly localized.

We use Proposition \ref{projection} to show

\begin{lemma}\label{goodgeo}
There is a number $\kappa_1>0$ such that
any two vertices in ${\cal A}(S,1)$ can be connected by a 
$(\kappa_1,1)$-good geodesic. 
\end{lemma}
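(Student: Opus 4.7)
The plan is to construct, between any two prescribed vertices $c_0,c_n\in{\cal A}(S,1)$, a geodesic $\zeta:[0,k]\to{\cal A}(S,1)$ that is $(\kappa_1,1)$-good for a universal constant $\kappa_1$. The central tool is Proposition \ref{projection} applied to the canonical modification $\tilde\zeta$, which by the proof of Lemma \ref{inductbegin} is a simplicial $2$-quasi-geodesic in ${\cal C\cal G}$.

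The canonical modification is not uniquely specified: whenever an edge $\zeta(j)\zeta(j+1)$ witnesses that both endpoints lie in a common thick subsurface $Y_j$ of Euler characteristic $\chi(S)+1$, one must insert as middle vertex some essential simple closed curve in the complement of $Y_j$. I would always choose the inserted curve $\alpha_j$ to be a boundary component of $Y_j$. With this choice, for any thick subsurface $X$ of $S$ with $\chi(X)\geq\chi(S)+1$ and $X\neq Y_j$, the curve $\alpha_j$ must intersect $X$: otherwise $X$ would lie on one side of $\alpha_j\subset\partial Y_j$, and it cannot be on the side disjoint from $Y_j$ since that side is planar, so $X\subset Y_j$; as both $X,Y_j$ have genus $g$ and $\chi(X)\geq\chi(Y_j)$, this forces $X=Y_j$, contradiction. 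The only edges where an inserted vertex fails to project to $X$ are therefore those with $X=Y_j$, and in that case both $\zeta(j),\zeta(j+1)\subset X$ sit at ${\cal C\cal G}(X)$-distance at most $2$ by Lemma \ref{inductbegin}.

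Now fix such an $X$. Every vertex of $\zeta$ is non-separating in $S$, hence projects nontrivially to $X$ since $X$ is thick; by the preceding step only the inserted vertices with $X=Y_j$ can fail to project. On each maximal sub-arc of $\tilde\zeta$ whose vertices all project to $X$, Proposition \ref{projection} with $L=2$ bounds the diameter of the $X$-projection by $\xi(2)$, and across each non-projecting inserted vertex the $X$-projection jumps by at most $2$ in the arc and curve graph of $X$. I would define $u(X)$ as the largest index $j$ such that $\pi_X(\zeta(j))$ still lies within distance $\kappa_1/2$ of $\pi_X(\zeta(0))$, and then verify the two good conditions by assembling these local bounds.

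The main obstacle is ruling out that the sequence $\pi_X(\zeta(j))$ oscillates, returning close to $\pi_X(\zeta(0))$ after having moved far from it; such oscillation would prevent a single transition index from bounding both halves. To exclude this I would invoke a Behrstock-style monotonicity argument: were the projections to return significantly after leaving, one could isolate a sub-arc of $\tilde\zeta$ with both endpoints projecting near $\pi_X(\zeta(0))$ but containing an intermediate vertex projecting far from it, which contradicts the $\xi(2)$-bound of Proposition \ref{projection} on that sub-arc combined with the bounded jumps at the at most finitely many non-projecting insertions traversed. The resulting monotonicity yields the single transition $u(X)$, and collecting the universal constants $\xi(2)$ and $2$ produces the desired $\kappa_1$.
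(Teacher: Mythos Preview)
Your approach has a genuine gap at the ``bounded jump'' claim. You assert that when $X=Y_j$ (so $\chi(X)=\chi(S)+1$), the curves $\zeta(j),\zeta(j+1)\subset X$ are at ${\cal C\cal G}(X)$-distance at most $2$ by Lemma~\ref{inductbegin}. But Lemma~\ref{inductbegin} compares ${\cal A}(X,1)$ with ${\cal C\cal G}(X)$; it does not bound $d_{{\cal C\cal G}(X)}(\zeta(j),\zeta(j+1))$ unless you know the two curves are close in ${\cal A}(X,1)$. Adjacency of $\zeta(j),\zeta(j+1)$ in ${\cal A}(S,1)$ only says they lie together in \emph{some} thick subsurface of $S$ of Euler characteristic $\chi(S)+1$---which here is $X$ itself---and says nothing about their distance in ${\cal C\cal G}(X)$, which can be arbitrarily large. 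So across a non-projecting inserted vertex the $X$-projection may jump by an unbounded amount, and your Behrstock-style monotonicity argument collapses: many large jumps can accumulate and genuine oscillation is not excluded. (There is a second, smaller issue: your argument that $\alpha_j$ intersects $X$ when $X\neq Y_j$ uses $\chi(X)\geq\chi(Y_j)$ together with $X\subset Y_j$ to force equality, but for thick $X$ with $\chi(X)>\chi(S)+1$ one can have $X\subsetneq Y_j$, so $\alpha_j\subset\partial Y_j$ is disjoint from $X$.)

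This is exactly why the paper does \emph{not} argue that an arbitrary geodesic is good; it \emph{modifies} the geodesic. At each place where two consecutive separating vertices of $\tilde\gamma$ fill a holed sphere with thick complement $Z$, the intermediate non-separating vertex lies in $Z$, and the paper replaces it by a non-separating curve in $Z$ lying near $\pi_Z(c_2)$. After this replacement the projections to any thick $Z$ are forced to sit near $\pi_Z(c_1)$ up to some index and near $\pi_Z(c_2)$ thereafter, with a single transition, yielding the $(\kappa_1,1)$-good property directly from Proposition~\ref{projection}. The point is that goodness is obtained by \emph{choosing} the geodesic, not by a monotonicity principle valid for all geodesics.
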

\begin{proof}
Let $c_1,c_2$ be non-separating simple closed curves 
and let $\gamma:[0,k]\to {\cal A}(S,1)$ be a simplicial geodesic
connecting $c_1$ to $c_2$, with canonical modification
$\tilde \gamma:[0,\tilde k]\to {\cal C\cal G}$.

Let $\ell_1<\dots<\ell_s$ be such that for each $i$ the curves  
$\tilde\gamma(\ell_i),\tilde\gamma(\ell_i+2)$ are both separating and 
such that the subsurface of $S$ filled by  
$\tilde \gamma(\ell_i)\cup \tilde \gamma(\ell_i+2)$ 
(i.e. the smallest subsurface of $S$ which contains 
$\tilde\gamma(\ell_i)\cup \tilde \gamma(\ell_i+2)$) 
is a holed sphere whose complement $Z$ in $S$ is thick
(we may have $s=0$, i.e. there may not be such a pair of vertices).
Then $\tilde\gamma(\ell+1)$ is a non-separating simple closed
curve contained in $Z$.
Let $\tilde \gamma_1(\ell_i+1)$ be a non-separating simple closed curve
contained in $Z$ which is contained in the one-neighborhood of 
the subsurface projection $\pi_Z(c_2)$ of $c_2$. That the
subsurface projection $\pi_Z(c_2)$ is not empty follows since
$c_2$ is non-separating and hence can not be contained in 
$S-Z$.

Replace $\tilde\gamma(\ell+1)$ by 
$\tilde \gamma_1(\ell+1)$.
The simplicial path $\tilde \gamma_1$ constructed in this way
is a canoncial modification of a geodesic $\gamma_1$
in ${\cal A}(S,1)$ connecting $c_1$ to $c_2$.
We claim that $\gamma_1$ is a $(\xi(2),1)$-good geodesic
in ${\cal A}(S,1)$ where $\xi(2)>0$ is as in 
Proposition \ref{projection}.

Namely, if $Z$ is an arbitrary thick subsurface of $S$ then
since $\gamma_1$ is a geodesic in ${\cal A}(S,1)$.
there are at most two parameters $k,k+\iota$ (here $\iota=0$ or 
$\iota=2$) 
such that $\tilde \gamma_1(k),\tilde \gamma_1(k+\iota)$ is disjoint
from $Z$. Since $\tilde \gamma_1$ is a $2$-quasi-geodesic in 
${\cal C\cal G}$, 
if there is at most one such point 
(which is in particular the case if the Euler
characteristic of $Z$ equals $\chi(S)+1$, i.e. if there is a unique
essential curve disjoint from $Z$) 
then the properties (1),(2) for $Z$ with $\kappa=\xi(2)$ 
are immediate from Lemma \ref{inductbegin}
and Proposition \ref{projection}.
Otherwise the property follows from the construction of $\gamma_1$
and the fact that for subsurfaces $X\subset Y\subset S$
and any simple closed curve $c$ we have
$\pi_X(c)=\pi_X(\pi_Y(c))$ (with a small abuse of notation).
\end{proof}

We use Proposition \ref{projection} and Lemma \ref{goodgeo} to 
define a \emph{level $p$ hierarchy path} in  
${\cal A}(p)$ connecting two non-separating simple closed
curves $c_1,c_2$ as follows. 
The starting point is a $(\kappa_1,1)$-good geodesic 
$\gamma:[0,k]\to {\cal A}(S,1)$.
For any $j$ so that the curves $\gamma(j),\gamma(j+1)$ are 
not disjoint there is a thick subsurface 
$Y_j$ of Euler characteristic $\chi(Y_j)=\chi(S)+1$ 
so that $\gamma(j),\gamma(j+1)\subset Y_j$.
Replace the edge $\gamma[j,j+1]$ by a simplicial $(\kappa_1,1)$-good geodesic
in ${\cal A}(Y_j,1)$ with the same endpoints. 
The resulting path is an edge-path in the
subgraph ${\cal A}(2)$ of ${\cal A}(1)$.
Proceed inductively and construct in $p$ such steps a simplicial path
in ${\cal A}(p)\subset {\cal A}(1)$ connecting $c_1$ to $c_2$ which we call
a \emph{level $p$ hierarchy path}.

\begin{lemma}\label{good2}
For every $p\geq 1$ there is a number $\kappa_p>0$ such that
a level $p$ hierarchy path in ${\cal A}(p)$ is $(\kappa_p,p)$-good.
\end{lemma}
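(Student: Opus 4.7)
The plan is to induct on $p$, taking the base case $p = 1$ to be Lemma \ref{goodgeo} itself. For the inductive step, fix a level $p$ hierarchy path $\zeta$. By construction it is produced from a level $(p-1)$ hierarchy path $\gamma\colon [0,k] \to {\cal A}(p-1)$ by replacing every non-disjoint edge $\gamma[j,j+1]$---whose endpoints lie in a common thick subsurface $Y_j \subset S$ of Euler characteristic $\chi(S)+p-1$---by a $(\kappa_1,1)$-good geodesic $\delta_j$ in ${\cal A}(Y_j,1)$. Fix a thick subsurface $X$ with $\chi(X) \geq \chi(S)+p$; since a fortiori $\chi(X) \geq \chi(S)+(p-1)$, the inductive hypothesis supplies a split index $u \in [0,k)$ controlling the $\pi_X$-projections of the $\gamma$-vertices by $\kappa_{p-1}$.

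The key geometric input---and the main obstacle of the proof---is an asymmetric version of Lemma \ref{rbounded}: there is a universal $\sigma > 0$ such that whenever $X,Y$ are thick subsurfaces of $S$ with $X \not\subset Y$, the projections $\pi_X(c)$, taken over all non-separating curves $c \subset Y$, lie in a single subset of the arc and curve graph of $X$ of diameter at most $\sigma$. The argument parallels Lemma \ref{rbounded}: non-inclusion forces some component of $\partial Y$ to be essential in $X$ (or to contribute an essential arc), and every $c \subset Y$ is disjoint from $\partial Y$, so its projection to $X$ avoids that component and is therefore bounded by Proposition \ref{projection} or a direct arc-graph estimate.

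To produce a split index for $\zeta$ I examine the distinguished edge $[\gamma(u),\gamma(u+1)]$. If it is a disjointness edge I carry $u$ directly into $\zeta$. If it is subdivided by $\delta_u$ with $X \subsetneq Y_u$, then $X$ is also thick in $Y_u$ with $\chi(X) \geq \chi(Y_u)+1$, so the $(\kappa_1,1)$-good property of $\delta_u$ yields a split inside $\delta_u$ which I take for $\zeta$. If $\delta_u$ exists but $X \not\subset Y_u$, the key fact puts every vertex of $\delta_u$ within $\sigma$ of $\pi_X(\gamma(u))$, so I again split at $\gamma(u)$. Verifying the $(\kappa_p,p)$-good inequalities for the remaining vertices of $\zeta$ is now routine: one combines the $\kappa_{p-1}$-bound at the nearest $\gamma$-vertex with, inside each $\delta_j$, either the $(\kappa_1,1)$-good bound of $\delta_j$ (when $X \subsetneq Y_j$, using that both $\pi_X(\gamma(j))$ and $\pi_X(\gamma(j+1))$ are close to the appropriate endpoint by induction) or the key fact (when $X \not\subset Y_j$). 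This yields $\kappa_p := \kappa_{p-1} + \kappa_1 + \sigma + O(1)$ and closes the induction.
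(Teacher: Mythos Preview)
Your proof is correct and follows the same inductive scheme as the paper's: start from a level $(p-1)$ hierarchy path, use its $(\kappa_{p-1},p-1)$-goodness to locate a split index for the target thick subsurface $X$, and then control the inserted $(\kappa_1,1)$-good geodesics $\delta_j$ individually to propagate the bound along the level $p$ path.

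The one point where you diverge is a genuine improvement in care. The paper applies the $(\kappa_1,1)$-good property of the subpath $\rho\subset {\cal A}(Z_i,1)$ directly to the thick subsurface $Z$ of $S$, without checking that $Z$ is a thick subsurface of $Z_i$; strictly speaking the definition of $(\kappa_1,1)$-good in ${\cal A}(Z_i,1)$ only speaks about thick subsurfaces of $Z_i$. Your explicit case split $X\subset Y_j$ versus $X\not\subset Y_j$ addresses this, and your ``key geometric input'' (if $X,Y$ are thick and $X\not\subset Y$ then all non-separating curves in $Y$ have $\pi_X$-image of uniformly bounded diameter, because some boundary component of $Y$ projects nontrivially to $X$ and every such curve is disjoint from it) is correct and is exactly what is needed to handle the second case. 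So your argument is essentially the paper's, made watertight at this step, and your constant $\kappa_p=\kappa_{p-1}+\kappa_1+\sigma+O(1)$ matches the paper's $\kappa_p=3\kappa_{p-1}+\kappa_1$ up to the harmless extra additive term.
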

\begin{proof} We proceed by induction on $p$. The case $p=1$ 
follows from the definition of a hierarchy path and
Lemma \ref{goodgeo}. Thus assume that the lemma holds true for
all $p-1\geq 1$. 

Let $\gamma:[0,n]\to {\cal A}(p)$ be a level 
$p$ hierarchy path. The construction of  
$\gamma$ is as follows. There is 
a level $p-1$ hierarchy path 
$\zeta:[0,s]\to {\cal A}(p-1)$, and  there are 
numbers $0\leq \tau_1<\dots <\tau_q<s$ such that
for each $i$, the edge $\zeta[\tau_i,\tau_{i}+1]$ 
connects two non-separating simple closed curves
which are contained in a thick subsurface $Z_i$ of $S$ of 
Euler characteristic $\chi(S)+p-1$. For 
$\ell\not\in\{\tau_1,\dots,\tau_q\}$, the
simple closed curves $\zeta(\ell),\zeta(\ell+1)$
are disjoint. 
The hierarchy path $\gamma$ is obtained 
from $\zeta$ by replacing each of the
edges $\zeta[\tau_i,\tau_i+1]$ by a $(\kappa_1,1)$-good geodesic in 
${\cal A}(Z_i,1)$ with the same endpoints.

By induction hypothesis, $\zeta$ is $(\kappa_{p-1},p-1)$-good for 
a number $\kappa_{p-1}>1$ not depending on $\zeta$.
Thus for any thick subsurface $Z$ of $S$ of Euler characteristic
$\chi(Z)\geq \chi(S)+p$ 
there is a number $u\in [0,s]$ so that
for all $j\leq u$ we have ${\rm diam}(\pi_Z(\zeta(0)\cup\zeta(j)))
\leq \kappa_{p-1}$ 
and similarly for $j\geq u+1$.

Let now $i>0$ be such that $\tau_i<u$.
There is a subarc $\rho$ of $\gamma$ which is a
$(\kappa_1,1)$-good geodesic in ${\cal A}(Z_i,1)$ 
connecting 
$\zeta(\tau_i)$ to $\zeta(\tau_i+1)$. 
By the definition of a $(\kappa_1,1)$-good 
geodesic in ${\cal A}(Z_i,1)$, since 
\begin{align}{\rm diam}(\pi_Z(\zeta(\tau_i)\cup\zeta(\tau_i+1)))& \leq
{\rm diam}(\pi_Z(\zeta(0)\cup \zeta(\tau_i)))+
{\rm diam}(\pi_Z(\zeta(0)\cup \zeta(\tau_i+1)))\notag\\\leq  
2\kappa_{p-1},\notag\end{align} 
for each vertex $\rho(t)$ on the geodesic $\rho$ 
the diameter of the subsurface projection
$\pi_Z(\zeta(\tau_i)\cup \rho(t))$ does not exceed 
$2\kappa_{p-1}+\kappa_1$. Then for each $t$ we have
\[{\rm diam}(\pi_Z(\zeta(0)\cup\rho(t)))\leq 
3\kappa_{p-1}+\kappa_1=\kappa_p.\]
This argument is also valid for $\tau_i>u$.

Finally if $\tau_i=u$ then we can apply the same
reasoning as before to the $\kappa_1$-good geodesic 
$\rho$ and obtain the statement of the lemma.
\end{proof}

\bigskip

\textit{Proof of Proposition \ref{curves}:}
By Lemma \ref{onepuncture}, if $S$ has at most
one puncture then the inclusion 
${\cal N\cal C}(S,1)\to {\cal C\cal G}$ is a quasi-isometry.

If the number of punctures is at least two then 
we show by induction on $p$ 
the following.
\begin{enumerate}
\item[a)] The graph ${\cal A}(p)$ is
hyperbolic.
\item[b)]  Level $p$ hierarchy paths are uniform
quasi-geodesics in ${\cal A}(p)$.
\item[c)] For every $L>1$ there is a number $\xi(L,p)>0$ with the 
following property.  Let
$\gamma$ be a simplicial path in ${\cal A}(p)$ which is
an $L$-quasi-geodesic, and  let $\tilde \gamma$ be the canonical
modification of $\gamma$. If 
 $Y$ is a thick subsurface of $S$
of Euler characteristic $\chi(Y)\geq \chi(S)+p$,
and if $\pi_Y(v)\not=\emptyset$ for every
vertex $v$ on $\tilde\gamma$ then ${\rm diam}\pi_Y(\gamma)<\xi(L,p)$.
\end{enumerate}

The case $p=1$ follows from Lemma \ref{inductbegin},
Proposition \ref{projection} and the definition of 
a canonical modification of a simplicial path in 
${\cal A}(S,1)$..
Assume that the claim holds true for $p-1\geq 1$. 

For a thick subsurface
$X$ of Euler characteristic $\chi(X)=\chi(S)+p-1$ let 
as before $H_{X}$ be the complete subgraph of ${\cal A}(p)$ 
whose vertex set 
consists of all non-separating 
simple closed curves contained in $X$,
and let ${\cal H}=\{H_X\mid X\}$.
By Lemma \ref{helec}, ${\cal A}(p-1)$ is $2$-quasi-isometric
to the ${\cal H}$-electrification of ${\cal A}(p)$.
Moreover by construction, level $p$ hierarchy paths
are enlargements of level $p-1$ hierarchy paths. 
Therefore by the induction hypothesis, to establish
properties a),b) above for $p$ it suffices
to show that the family ${\cal H}$ is bounded and 
satisfies the assumptions (1),(3) in the statement
of Theorem \ref{hypextension}.

Lemma \ref{rbounded} shows 
that the family ${\cal H}=\{H_{X}\mid X\}$ is bounded.

By Lemma \ref{identify}, $H_{X}$ is isometric
to ${\cal A}(X,1)$ and hence by Lemma \ref{inductbegin},
$H_X$ is $\delta$-hyperbolic 
for a number $\delta >0$ not depending on $X$.
The bounded penetration property for ${\cal H}$ 
follows from property c) above, applied to 
thick subsurfaces of Euler characteristic $\chi(S)+p$
and quasi-geodesics in ${\cal A}(p-1)$ (compare \cite{H13}).
Thus by Theorem \ref{hypextension} and the induction
hypothesis, ${\cal A}(p)$ is hyperbolic, and level $p$ hierarchy
paths are uniform quasi-geodesics in ${\cal A}(p)$.

We are left with verifying property c) above for ${\cal A}(p)$.
By Lemma \ref{good2}, 
this property holds true for level $p$ hierarchy paths 
with the number $\kappa_p>0$ replacing $\xi(L,p)$.
The argument in the proof of Lemma 6.5 of \cite{H13}
then yields this property for an arbitrary  
$L$-quasi-geodesic in ${\cal A}(p)$ for a suitable number
$\xi(L,p)>0$.

Namely, by hyperbolicity, for every $L>1$ there is a
number $n(L)>1$ so that for every $L$-quasi-geodesic
$\eta:[0,k]\to {\cal A}(p)$ of finite length, the Hausdorff distance
between the image of $\eta$ and the image of a 
level $p$ hierarchy path $\gamma$ with the same endpoints
does not exceed $n(L)$.

Let $Y\subset S$ be a thick subsurface of Euler characteristic
$\chi(Y)\geq \chi(S)+p$. Assume that
\begin{equation}\label{assumption}
{\rm diam}(\pi_Y(\eta(0)\cup \eta(k)))\geq 
2\kappa_p +L(4n(L)+10).\end{equation}
By the properties of level $p$ hierarchy paths,
if $\tilde \gamma$ denotes the canonical modification of 
$\gamma$ then  
there is some $u\in \mathbb{Z}$ so that 
$\tilde \gamma(u)\in A$ where $A\subset {\cal C\cal G}$ is 
the set of all curves which are disjoint from $Y$.

By the choice of $n(L)$, the quasi-geodesic $\eta$ 
passes through the 
$n(L)$-neighborhood of $Y$. By this we mean that
there is a vertex $x$ on $\eta$ and a simplicial 
path in ${\cal A}(p)$ 
of length at most $n(L)$ which connects $x$ to
a non-separating simple closed curve  contained in $Y$.

Let $s_0+1\leq t_0-1$ be the smallest and biggest number,
respectively, so that 
$\eta(s_0+1),\eta(t_0-1)$ are contained in the
$n(L)$-neighborhood of $Y$ in the sense defined
in the previous paragraph.   
The distance in ${\cal A}(p)$ between
$\eta(s_0)$ and $\eta(t_0)$ does not exceed 
$2n(L)+3$.

A level $p$ hierarchy path connecting $\eta(0)$ to 
$\eta(s_0)$ 
is contained in the $(n(L)-1)$-neighborhood of
$\eta[0,s_0]$ and hence its canonical modification does not
pass through $A$. Similarly, a canonical modification of a 
level $p$ hierarchy path
connecting $\eta(t_0)$ to $\eta(k)$
does not pass through $A$. By the assumption (\ref{assumption}), 
and the properties of hierarchy paths, this implies that
\[{\rm diam}(\pi_Y(\eta(s_0)\cup\eta(t_0)))\geq 
L(4n(L)+10).\] 

The distance in ${\cal A}(p)$ between 
$\eta(s_0),\eta(t_0)$
is at most $2n(L)+3$, and hence since
$\eta$ is an $L$-quasi-geodesic, the length of the segment
$\eta[s_0,t_0]$ is at most $L(2n(L)+4)$. Then the length of a
canonical modification $\tilde \eta[s,t]$ of $\eta[s_0,t_0]$ is at most
$L(4n(L)+8)$. 
Now if $c,d$ are disjoint simple closed curves
which intersect $Y$ then the diameter of $\pi_Y(c\cup d)$ is at
most one. Thus if $\tilde \eta(\ell)$ intersects $Y$ for all 
$\ell$ then
\[{\rm diam}(\pi_Y(\tilde \eta(s)\cup \tilde \eta(t)))=
{\rm diam}(\pi_Y(\eta(s_0)\cup\eta(t_0)))\leq L(4n(L)+8 )\]
which is a contradiction.

This completes the induction step and proves
Proposition \ref{curves}.
\qed

\bigskip

The arguments in \cite{H13} can now be used without
modification to identify the Gromov boundary of 
${\cal N\cal C}(S,1)$. To this end 
let ${\cal L}$ be the set of all geodesic laminations on
$S$ equipped with the coarse Hausdorff topology.
In this topology, a sequence $(\nu_i)$ converges to 
$\nu$ if any limit in the usual Hausdorff topology
of a convergent subsequence contains $\nu$ as a sublamination.

For each thick subsurface $X$ of $S$ let 
${\cal L}(X)\subset {\cal L}$ be the set of all minimal geodesic
laminations which fill up $X$, equipped with the
coarse Hausdorff topology. We have

\begin{corollary}\label{boundary}
The Gromov boundary of ${\cal N\cal C}(S,1)$ equals
$\cup_X{\cal L}(X)$ equipped with the coarse
Hausdorff topology.
\end{corollary}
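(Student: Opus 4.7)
The plan is to follow the inductive boundary identification from Section~7 of \cite{H13} verbatim, which is the approach the paper itself indicates. The core ingredient is a boundary version of Theorem~\ref{hypextension}: when a hyperbolic graph ${\cal G}$ is recovered from an electrification ${\cal E\cal G}$ via a bounded, uniformly quasi-convex, uniformly hyperbolic family ${\cal H}$ satisfying the bounded penetration property, every quasi-geodesic ray in ${\cal G}$ is either \emph{transverse} --- its enlargement projects to a quasi-geodesic ray in ${\cal E\cal G}$ converging to a point of $\partial {\cal E\cal G}$ (minus cone points) --- or \emph{trapped} --- it eventually stays in a single $H_c$ and converges to a point of $\partial H_c$. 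Consequently $\partial {\cal G}$ decomposes as the disjoint union of the transverse part of $\partial {\cal E\cal G}$ and $\bigsqcup_c \partial H_c$.

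I would then run this recipe up the filtration ${\cal A}(1) \subset {\cal A}(2) \subset \cdots \subset {\cal A}(p) = {\cal N\cal C}(S,1)$. For the base case, Lemma~\ref{inductbegin} together with Klarreich's identification of the Gromov boundary of the curve graph gives $\partial {\cal A}(X,1) = {\cal L}(X)$ for every thick subsurface $X$. For the inductive step from $p-1$ to $p$, Lemma~\ref{helec} identifies the electrification of ${\cal A}(p)$ with ${\cal A}(p-1)$, while Lemma~\ref{identify} plus the base case gives $\partial H_X = {\cal L}(X)$ for each thick $X$ with $\chi(X) = \chi(S)+p-1$. Thus the inductive step adds precisely the ${\cal L}(X)$ for $X$ at level $p$. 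Once $p$ is large enough that ${\cal A}(p)={\cal N\cal C}(S,1)$, every thick subsurface of $S$ has contributed, producing the set-theoretic identification $\partial {\cal N\cal C}(S,1) = \bigcup_X {\cal L}(X)$.

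The main technical obstacle is the topological matching, i.e.\ showing that this bijection is a homeomorphism when the left side carries the Gromov topology and the right side the coarse Hausdorff topology. One direction uses level-$p$ hierarchy paths (Lemma~\ref{good2}): if $\nu_i \to \nu$ in the coarse Hausdorff topology, then hierarchy paths from a fixed basepoint representing $\nu_i$ fellow-travel with a hierarchy path representing $\nu$ on progressively longer initial segments, forcing Gromov convergence of the corresponding boundary points. The reverse direction combines Proposition~\ref{projection} with property c) established in the proof of Proposition~\ref{curves}: Gromov convergence forces uniform bounds on all relevant subsurface projections, which in turn forces every Hausdorff accumulation point of the associated vertex sequence to contain the target lamination. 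These are precisely the arguments carried out in \cite{H13}, where the interplay between subsurface projections, hierarchy paths, and the coarse Hausdorff topology is developed in detail; by the author's remark they transfer to the present setting with no essential change.
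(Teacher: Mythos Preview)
Your proposal is correct and matches the paper's approach exactly: the paper gives no proof beyond the sentence ``the arguments in \cite{H13} can now be used without modification,'' and what you have written is precisely a faithful outline of how those arguments (the boundary decomposition from the hyperbolic extension theorem, Klarreich's theorem for the base case, and the induction along the ${\cal A}(p)$-chain with hierarchy paths handling the topology) apply here. One small notational slip: the graph inclusions run the other way, ${\cal A}(1)\supset {\cal A}(2)\supset\cdots\supset {\cal N\cal C}(S,1)$, since ${\cal A}(p)$ is obtained from ${\cal A}(p-1)$ by deleting edges (cf.\ the proof of Lemma~\ref{helec}); your inductive direction is nonetheless correct.
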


\section{Proof of  the theorem}

In this section we consider 
an oriented surface $S$ of genus
$g\geq 2$ with $m\geq 0$ punctures.
In the introduction we defined for 
$n\geq 1$ the graph ${\cal N\cal C}(S,n)$ of non-separating
multicurves in $S$ with $n$ components. 
Our goal is to show

\begin{theorem}\label{cutsystem}
For $n<g/2+1$ the graph ${\cal N\cal C}(S,n)$ is hyperbolic.
\end{theorem}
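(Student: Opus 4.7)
The plan is to proceed by induction on $n$, with the base case $n=1$ furnished by Proposition \ref{curves}. For the inductive step we apply Theorem \ref{hypextension} to ${\cal G} := {\cal N\cal C}(S,n)$ equipped with the family ${\cal H} = \{H_c\}$ indexed by non-separating $(n-1)$-multicurves $c$ in $S$, where $H_c\subset {\cal N\cal C}(S,n)$ is the complete subgraph spanned by non-separating $n$-multicurves containing $c$. The key structural observation is that the map $\mu\mapsto \mu - c$ provides a graph isomorphism $H_c\cong {\cal N\cal C}(S-c,1)$: two $n$-multicurves extending $c$ are adjacent in ${\cal N\cal C}(S,n)$ precisely when their $c$-complements are disjoint non-separating curves in the cut surface $S-c$. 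Under the hypothesis $n<g/2+1$ the surface $S-c$ has genus $g-(n-1)\geq 2$, so Proposition \ref{curves} applied to $S-c$ gives a uniform hyperbolicity constant $\delta>0$ for every $H_c$, verifying hypothesis (1) of Theorem \ref{hypextension}.

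Boundedness of ${\cal H}$ is elementary: if $c\neq c'$ and $\mu\in H_c\cap H_{c'}$ then $c\cup c'\subseteq \mu$ has at most $n$ components, forcing $c$ and $c'$ to share $n-2$ components and $\mu = c\cup c'$; hence each intersection $H_c\cap H_{c'}$ contains at most one vertex. For hypothesis (2) we show that the ${\cal H}$-electrification ${\cal E}$ is $2$-quasi-isometric to ${\cal N\cal C}(S,n-1)$, which is hyperbolic by the inductive hypothesis. The quasi-isometry sends $v_c\mapsto c$ and sends $\mu\in {\cal N\cal C}(S,n)$ to any non-separating $(n-1)$-submulticurve of $\mu$ (every submulticurve of a non-separating multicurve is non-separating, since deleting components of a connected complement keeps it connected). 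The only non-routine case is bounding $d_{\cal E}(v_{c_1},v_{c_2})$ when $c_1,c_2$ are adjacent in ${\cal N\cal C}(S,n-1)$ but $c_1\cup c_2$ is separating in $S$; here the range hypothesis $n<g/2+1$ leaves enough residual genus in the subsurface of $S-(c_1\cap c_2)$ cobounded by the unshared components of $c_1,c_2$ to produce a non-separating simple closed curve $\gamma$ disjoint from $c_1\cup c_2$ such that both $c_1\cup \gamma$ and $c_2\cup \gamma$ are non-separating $n$-multicurves sharing the common $(n-1)$-submulticurve $(c_1\cap c_2)\cup \gamma$. This yields a path of uniformly bounded length in ${\cal E}$ between $v_{c_1}$ and $v_{c_2}$.

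The main technical obstacle is verifying the bounded penetration property for ${\cal H}$, which we would treat by adapting the strategy of Lemmas \ref{goodgeo} and \ref{good2}. For each non-separating $(n-1)$-multicurve $c$ there is a natural subsurface projection $\pi_{S-c}$ defined on those vertices of ${\cal N\cal C}(S,n)$ which do not contain $c$ as a submulticurve, taking values in the arc-and-curve graph of $S-c$. Since by Lemma \ref{inductbegin} this target is quasi-isometric to ${\cal C\cal G}(S-c)$ and hence to $H_c\cong {\cal N\cal C}(S-c,1)$, these projections localize the interaction of quasi-geodesics in ${\cal N\cal C}(S,n)$ with each $H_c$. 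One then defines level-$n$ hierarchy paths in ${\cal N\cal C}(S,n)$ as enlargements of level-$(n-1)$ hierarchy paths in ${\cal N\cal C}(S,n-1)$ through $(\kappa,1)$-good geodesics inside each $H_c$, the latter being furnished by the analogue of Lemma \ref{goodgeo} applied in the surface $S-c$; combining Proposition \ref{projection} inside $S-c$ with the inductive analogue of property (c) from the proof of Proposition \ref{curves} then yields the uniform constants required for bounded penetration. Having verified hypotheses (1)--(3) of Theorem \ref{hypextension}, its conclusion gives hyperbolicity of ${\cal N\cal C}(S,n)$, completing the induction.
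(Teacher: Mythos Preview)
Your inductive setup, the identification $H_c\cong {\cal N\cal C}(S-c,1)$, the boundedness argument, and the quasi-isometry between the ${\cal H}$-electrification and ${\cal N\cal C}(S,n-1)$ all match the paper (Lemmas \ref{identifyhnu} and \ref{quasiiso}) and are correct.

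The genuine gap is in your verification of bounded penetration. You assert that the arc-and-curve graph of $S-c$ ``is quasi-isometric to ${\cal C\cal G}(S-c)$ and hence to $H_c\cong {\cal N\cal C}(S-c,1)$''. The first quasi-isometry is standard, but the second is false: cutting $S$ along an $(n-1)$-multicurve produces a surface $S-c$ with $2(n-1)+m\geq 2$ punctures, and for surfaces with at least two punctures the inclusion ${\cal N\cal C}(S-c,1)\hookrightarrow {\cal C\cal G}(S-c)$ is \emph{not} a quasi-isometry. This is precisely the content of Section 3; Lemma \ref{inductbegin} concerns ${\cal A}(X,1)$, which has strictly more edges than ${\cal N\cal C}(X,1)$. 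Consequently the subsurface projection $\pi_{S-c}$, which measures ${\cal C\cal G}(S-c)$-distance, does not control penetration depth in $H_c$, which is measured in the ${\cal N\cal C}(S-c,1)$-metric: entry and exit points of a quasi-geodesic into $H_c$ can be arbitrarily far apart in $H_c$ while having bounded projection to $S-c$, namely when they differ inside a proper thick subsurface of $S-c$.

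The paper addresses this obstruction explicitly. Rather than passing directly from ${\cal N\cal C}(S,n-1)$ to ${\cal N\cal C}(S,n)$, it interpolates a chain of graphs ${\cal N\cal C}(S,n,p)$ in which edges are defined via ${\cal A}(S-\hat\nu,p)$ rather than ${\cal N\cal C}(S-\hat\nu,1)$. In ${\cal N\cal C}(S,n,1)$ the relevant subgraphs $H_\nu(1)$ are isometric to ${\cal A}(S-\nu,1)$, which \emph{is} quasi-isometric to ${\cal C\cal G}(S-\nu)$ by Lemma \ref{inductbegin}, so subsurface projection now controls penetration and Theorem \ref{hypextension} applies (Proposition \ref{intermediate}). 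A second induction on $p$, using subgraphs $B_X$ indexed by subsurfaces of genus $g-n+1$ and Euler characteristic $\chi(S)+p-1$ (Lemma \ref{subsurfacetwo}), then carries hyperbolicity from ${\cal N\cal C}(S,n,p-1)$ to ${\cal N\cal C}(S,n,p)$, reaching ${\cal N\cal C}(S,n)$ for $p$ large.
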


The case $n=1$ 
is just Proposition \ref{curves}.
For $2\leq n<g/2+1$ we use induction on $n$ 
similiar to the arguments in the 
proof of Proposition \ref{curves}.
There are no new tools needed, however all 
the constructions have to be adjusted to the situation at hand.




We begin with describing an electrification 
of the graph ${\cal N\cal C}(S,n)$. 
First, 
for a non-separating
$(n-1)$-multicurve $\nu\in {\cal N\cal C}(S,n-1)$ 
let $H_\nu$ 
be the complete subgraph of ${\cal N\cal C}(S,n)$ 
whose vertex set consists of all non-separating
$n$-multi-curves containing $\nu$.
We have

\begin{lemma}\label{identifyhnu}
There is a natural graph isomorphism
$H_\nu\to {\cal N\cal C}(S-\nu,1)$.
\end{lemma}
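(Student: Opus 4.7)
The plan is to write down the obvious candidate map on vertices and then check that, thanks to the definition of ``non-separating $n$-multicurve'' and the edge relations in both graphs, it is a bijection that preserves and reflects edges.

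First I would define $\Phi:H_\nu\to {\cal N\cal C}(S-\nu,1)$ on vertices as follows. Any vertex $\mu$ of $H_\nu$ is a non-separating $n$-multicurve in $S$ containing $\nu$, so it has the form $\mu=\nu\cup\{c\}$ for a simple closed curve $c$ disjoint from $\nu$ and not isotopic to any component of $\nu$. I set $\Phi(\mu)=c$, viewed as a simple closed curve in the cut-open surface $S-\nu$. The key well-definedness point is that $c$ is non-separating in $S-\nu$: since $\nu$ itself is non-separating in $S$, the surface $S-\nu$ is connected, and $\mu$ being non-separating in $S$ means $S-\mu=(S-\nu)-c$ is connected, which is exactly the definition of $c$ being non-separating in $S-\nu$. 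Conversely, given any non-separating simple closed curve $c$ in $S-\nu$, the multicurve $\nu\cup\{c\}$ is a non-separating $n$-multicurve in $S$ containing $\nu$; this gives an inverse on vertices, so $\Phi$ is a bijection of vertex sets.

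Next I would check the edge relations. Two vertices $\mu_1=\nu\cup\{c_1\}$ and $\mu_2=\nu\cup\{c_2\}$ of $H_\nu$ are connected by an edge of ${\cal N\cal C}(S,n)$ if and only if they can be realized disjointly and differ by a single component. Since both contain $\nu$, they differ by a single component exactly when $c_1\neq c_2$ as isotopy classes; and since $c_1,c_2$ are each already disjoint from $\nu$, they can be realized disjointly from each other in $S$ if and only if they can be realized disjointly in $S-\nu$. That is precisely the edge condition for $\Phi(\mu_1)=c_1$ and $\Phi(\mu_2)=c_2$ in ${\cal N\cal C}(S-\nu,1)$. Hence $\Phi$ and its inverse are simplicial, so $\Phi$ is a graph isomorphism.

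The naturality claim just records that the construction uses no choices beyond $\nu$: cutting along $\nu$ and forgetting the already-present components is canonical, and it is equivariant under the stabilizer of $\nu$ in ${\rm Mod}(S)$. The only tiny point requiring care is the equivalence of ``non-separating in $S$'' for $\nu\cup\{c\}$ with ``non-separating in $S-\nu$'' for $c$, and this is immediate from the connectedness of $S-\nu$; there is no real obstacle here.
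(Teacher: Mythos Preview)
Your proof is correct and follows essentially the same approach as the paper: define the map $\mu=\nu\cup\{c\}\mapsto c$, verify it is a bijection on vertices using that $(S-\nu)-c=S-\mu$ is connected, and check that the edge relations (disjointness plus differing in a single component) match up on both sides. Your write-up is in fact slightly more explicit than the paper's, spelling out the well-definedness step and the edge-reflection direction that the paper leaves implicit.
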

\begin{proof} If $\beta\in H_\nu$ is any 
non-separating $n$-multicurve containing $\nu$ 
then $\beta-\nu$ is a non-separating simple closed curve
in $S-\nu$. If $\beta,\beta^\prime\in H_\nu$ are connected
by an edge then
the non-separating simple closed curves
$\beta-\nu$ and 
$\beta^\prime-\nu$ are disjoint and hence they
are connected in ${\cal N\cal C}(S-\nu,1)$ by an edge. 

Vice versa, the union with $\nu$ of any non-separating
simple closed 
curve $c$ in $S-\nu$ is a non-separating multicurve in 
$H_\nu$. If $c^\prime\subset S-\nu$ is non-separating
and disjoint from $c$ then $\nu\cup c$ and $\nu\cup c^\prime$
are connected by an edge in $H_\nu$.

This shows the 
lemma.
\end{proof}


Let ${\cal H}=\{H_\nu\mid \nu\in {\cal N\cal C}(S,n-1)\}$.

\begin{lemma}\label{quasiiso}
${\cal N\cal C}(S,n-1)$ is quasi-isometric to the
${\cal H}$-electrification of ${\cal N\cal C}(S,n)$.
\end{lemma}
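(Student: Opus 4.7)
The plan mirrors Lemma~\ref{helec}, with the twist that the vertex sets of $\mathcal{NC}(S,n-1)$ and of the $\mathcal{H}$-electrification $\mathcal{E}$ of $\mathcal{NC}(S,n)$ differ, so the quasi-isometry cannot be a vertex inclusion. I will send $\nu\in\mathcal{NC}(S,n-1)$ to the special vertex $F(\nu)=v_\nu\in\mathcal{E}$, and define a coarse inverse $\pi\colon\mathcal{E}\to\mathcal{NC}(S,n-1)$ by $\pi(v_\mu)=\mu$ and $\pi(\beta)=\beta-\{c_\beta\}$ for a fixed choice of component $c_\beta$ of $\beta\in\mathcal{NC}(S,n)$. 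This $\pi$ is well defined because $S-(\beta-\{c_\beta\})\supset S-\beta$ is connected, so $\beta-\{c_\beta\}\in\mathcal{NC}(S,n-1)$. One has $\pi\circ F=\mathrm{id}$ and $d_{\mathcal{E}}(\beta,F(\pi(\beta)))=1$, so coarse surjectivity of $F$ and a coarse-inverse relation between $F$ and $\pi$ are immediate.

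The main work is the Lipschitz bound for $F$. Suppose $\nu,\nu'$ are adjacent in $\mathcal{NC}(S,n-1)$; write $\mu=\nu\cap\nu'$, $\nu=\mu\cup\{d\}$, $\nu'=\mu\cup\{c'\}$, so that $d,c'$ are disjoint non-separating simple closed curves in the connected surface $S-\mu$. If $\mu\cup\{d,c'\}$ is again a non-separating $n$-multicurve, it lies in $H_\nu\cap H_{\nu'}$ and $d_{\mathcal{E}}(v_\nu,v_{\nu'})\leq 2$. The delicate case, which I expect to be the main obstacle, is when $d$ and $c'$ are parallel in $S-\mu$, cobounding an annulus $A$; then $\mu\cup\{d,c'\}$ is separating. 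I will handle this by taking the non-annulus component $B$ of $S-(\mu\cup d\cup c')$: the surface $S-\mu$ has genus $g-n+2$, and cutting further along the parallel pair $\{d,c'\}$ peels off $A$ and leaves $B$ of genus $g-n+1\geq 1$, in which I pick a non-separating simple closed curve $e$. Since $S-\nu$ and $S-\nu'$ are each obtained from $B$ by attaching an annular collar (along $c'$, respectively $d$), the curve $e$ remains non-separating in each, so both $\nu\cup\{e\}$ and $\nu'\cup\{e\}$ are non-separating $n$-multicurves; they are disjoint and differ in exactly one component, hence adjacent in $\mathcal{NC}(S,n)$. This produces a path $v_\nu,\nu\cup\{e\},\nu'\cup\{e\},v_{\nu'}$ of length $3$ in $\mathcal{E}$.

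It remains to verify that $\pi$ is coarsely Lipschitz on edges. For an edge $v_\mu-\beta$ of $\mathcal{E}$, the multicurves $\mu$ and $\pi(\beta)$ are two non-separating $(n-1)$-submulticurves of the $n$-multicurve $\beta$, so they are either equal or disjoint and differ in one component; in either case they are at distance at most $1$ in $\mathcal{NC}(S,n-1)$. For an edge $\beta-\beta'$ arising from $\mathcal{NC}(S,n)$, the common part $\nu_0=\beta\cap\beta'$ is a non-separating $(n-1)$-multicurve, and the previous observation, applied to the pairs $(v_{\nu_0},\beta)$ and $(v_{\nu_0},\beta')$, shows that each of $\pi(\beta),\pi(\beta')$ lies within distance one of $\nu_0$, giving distance at most $2$. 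Combined with the bound on $F$ this shows $\mathcal{NC}(S,n-1)$ and $\mathcal{E}$ are quasi-isometric.
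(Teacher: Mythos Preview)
Your overall architecture is sound and, once repaired, is actually cleaner than the paper's argument. The gap is in your case analysis for the Lipschitz bound on $F$.

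You split into two cases: either $\mu\cup\{d,c'\}$ is non-separating, or $d$ and $c'$ are parallel in $S-\mu$ and cobound an annulus. This dichotomy is false. Since $\nu\neq\nu'$ forces $d\neq c'$ as isotopy classes in $S$ (hence in $S-\mu$), the parallel case never occurs. What \emph{can} happen is that $d$ and $c'$ form a genuine bounding pair in $S-\mu$: two disjoint non-separating curves that are homologous but not isotopic, so that $(S-\mu)-(d\cup c')$ has two components $B_1,B_2$, each carrying both $d$ and $c'$ on its boundary, with genera $h_1+h_2=g-n+1$. Neither component need be an annulus, and your genus computation ``$B$ of genus $g-n+1$'' is specific to the annulus picture. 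This is exactly the situation the paper calls ``$a\cup b$ is a bounding pair in $S-(c_0-a)$''.

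The fix is minor and your construction survives it. Since $g-n+1\geq 1$, at least one $B_i$ has positive genus; pick a non-separating simple closed curve $e$ there. Then $(S-\mu)-d$ is obtained by regluing $B_1$ and $B_2$ along $c'$, so $e$ (non-separating in $B_i$) remains non-separating in $(S-\mu)-d$; hence $\nu\cup\{e\}$ is a non-separating $n$-multicurve. The symmetric argument gives the same for $\nu'\cup\{e\}$, and your length-$3$ path $v_\nu,\ \nu\cup\{e\},\ \nu'\cup\{e\},\ v_{\nu'}$ goes through as written.

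For comparison, the paper maps $\nu$ to an arbitrary $n$-multicurve $\Lambda(\nu)\supset\nu$ rather than to the cone point $v_\nu$, and in the bounding-pair case it inserts an auxiliary $(n-1)$-multicurve $\mu\cup\omega$ and applies the easy case twice, obtaining a Lipschitz constant of $8$. Your use of the special vertices and the direct choice of $e$ avoids that detour and gives a better constant, but you must replace ``parallel, cobounding an annulus'' by ``a bounding pair'' and adjust the genus discussion accordingly.
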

\begin{proof} Let ${\cal E}$ be the ${\cal H}$-electrification
of ${\cal N\cal C}(S,n)$.
Define a vertex embedding $\Lambda:
{\cal N\cal C}(S,n-1)\to {\cal E}$ by associating to 
a non-separating $(n-1)$-multicurve $c$ any 
non-separating $n$-multicurve
$\Lambda(c)$ containing $c$. We claim that 
$\Lambda$ is coarsely $8$-Lipschitz.

To see this 
let $c_0,c_1$ be connected by an edge in ${\cal N\cal C}(S,n-1)$.
Then $c_1$ is obtained from 
$c_0$ by removing a component $a$ from $c_0$ and
replacing it by a component $b$ disjoint from $c_0$.

The union $c_0\cup b$ is a multicurve with $n$
components. If this multicurve is  
non-separating then we can view
it as a vertex $x\in{\cal E}$. Since $\Lambda(c_0)$
is a non-separating $n$-multicurve containing $c_0$,
the distance in ${\cal E}$ 
between $\Lambda(c_0)$ and $x$ equals at most $2$.
Similarly, the distance in ${\cal E}$ between $\Lambda(c_1)$ and
$x$ is at most $2$ and hence the distance in ${\cal E}$
between $\Lambda(c_0)$ and $\Lambda(c_1)$ is at most $4$.

If $c_0\cup b$ is not non-separating then 
$a\cup b$ is a bounding pair in 
$S-(c_0-a)=S-(c_1-b)$. 
Choose a non-separating simple closed 
curve $\omega\in S-(c_0-a)$ which is  disjoint from $a\cup b$ 
so that both $c_0\cup \omega$ and
$c_1\cup \omega$ are non-separating.
Such a curve exists since the genus $g-n+2$ of 
$S-(c_0\cup a)$ is at least three.
Apply the same argument to the non-separating 
$(n-1)$-multicurves $c_0,(c_0-a)\cup \omega$ and 
to $(c_0-a)\cup \omega,c_1$. We conclude that the
distance in ${\cal E}$ between $\Lambda(c_0)$ and 
$\Lambda(c_1)$ is at most $8$.

On the other hand, a map which associates to 
a vertex $x\in {\cal N\cal C}(S,n)\subset 
{\cal E}$ an $(n-1)$-multicurve 
contained in $x$ is a coarsely Lipschitz
coarse inverse of $\Lambda$. Thus indeed 
$\Lambda$ is a quasi-isometry.
%
\end{proof}

By Lemma \ref{identifyhnu}, for each vertex
$\nu\in {\cal N\cal C}(S,n-1)$ the complete connected
subgraph
$H_\nu$ of ${\cal N\cal C}(S,n)$
is quasi-isometric to the hyperbolic graph
${\cal N\cal C}(S-\nu,1)$.  However,
by the results in Section 3, the graph
${\cal N\cal C}(S-\nu,1)$ is not
quasi-isometric to the curve graph of $S-\nu$.
Thus controlling distances in these subgraphs
via subsurface projection is not immediate.
Moreover, for a subsurface $X$ of $S$ of genus 
$g-n+1$ there are in general many different
non-separating $(n-1)$-multicurves disjoint from $X$.

To resolve this problem we use exactly the strategy
from Section 3. Namely, we introduce intermediate
graphs ${\cal N\cal C}(S,n,p)$ $(p\geq 1)$ which 
are defined as follows.
Vertices of ${\cal N\cal C}(S,n,p)$ are
non-separating $n$-multicurves. Two such 
multicurves $\nu_0,\nu_1$ are connected
by an edge of length one if $\hat \nu=\nu_0\cap \nu_1$
is an $(n-1)$-multicurve and if the non-separating
simple closed curves $a=\nu_0-\hat \nu$ and 
$b=\nu_1-\hat \nu$ are connected by
an edge in the graph ${\cal A}(S-\hat \nu,p)$.

The strategy is now to deduce hyperbolicity
of ${\cal N\cal C}(S,n,1)$ from hyperbolicity
of ${\cal N\cal C}(S,n-1)$, and for $p\geq 2$ to deduce
hyperbolicity of ${\cal N\cal C}(S,n,p)$ from
hyperbolicity of ${\cal N\cal C}(S,n,p-1)$.

For a non-separating $(n-1)$-multicurve
$\nu\in {\cal N\cal C}(S,n-1)$ let
$H_\nu(1)$ be the complete subgraph of 
${\cal N\cal C}(S,n,1)$ whose vertex
set consists of all non-separating 
$n$-multicurves containing $\nu$. 
Define moreover 
\[{\cal H}(1)=\{H_\nu(1)\mid \nu\}.\]
The following 
is immediate from the reasoning in Lemma \ref{identifyhnu}
and Lemma \ref{quasiiso}.

\begin{lemma}\label{identify2}
\begin{enumerate}
\item
There is a natural graph isomorphism 
$H_\nu(1)\to {\cal A}(S-\nu,1)$.
\item ${\cal N\cal C}(S,n-1)$ is quasi-isometric to the
${\cal H}(1)$-electrification of ${\cal N\cal C}(S,n,1)$.
\end{enumerate}
\end{lemma}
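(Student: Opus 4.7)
The plan is to recycle almost verbatim the proofs of Lemma \ref{identifyhnu} and Lemma \ref{quasiiso}, exploiting the fact that the vertex set of $H_\nu(1)$ is identical to that of $H_\nu$ (only the edge relation has been enriched).

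For claim (1), I would define the vertex map $\Phi\colon H_\nu(1)\to {\cal A}(S-\nu,1)$ by $\beta\mapsto \beta-\nu$. Exactly as in Lemma \ref{identifyhnu}, $\beta-\nu$ is a non-separating simple closed curve in $S-\nu$, and the inverse map $c\mapsto c\cup\nu$ is well-defined because $S-(c\cup\nu)=(S-\nu)-c$ is connected whenever $c$ is non-separating in $S-\nu$. So $\Phi$ is a bijection on vertices. The edge relations then match by construction: any two vertices $\beta,\beta'$ of $H_\nu(1)$ share $\nu$, and if they are joined by an edge then $\beta\cap\beta'=\nu$ (since both $n$-multicurves contain $\nu$ and the edge relation in ${\cal N\cal C}(S,n,1)$ requires an $(n-1)$-multicurve intersection), so by the very definition of ${\cal N\cal C}(S,n,1)$ their adjacency is equivalent to adjacency of $\beta-\nu$ and $\beta'-\nu$ in ${\cal A}(S-\nu,1)$. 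Hence $\Phi$ is a graph isomorphism.

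For claim (2), I would follow the proof of Lemma \ref{quasiiso} without substantive change. Let $\mathcal{E}$ be the ${\cal H}(1)$-electrification of ${\cal N\cal C}(S,n,1)$. Define $\Lambda\colon {\cal N\cal C}(S,n-1)\to \mathcal{E}$ by sending each $(n-1)$-multicurve $c$ to any non-separating $n$-multicurve $\Lambda(c)\supset c$; in the other direction, send $x\in {\cal N\cal C}(S,n)\subset \mathcal{E}$ to any $(n-1)$-submulticurve of $x$, which is automatically non-separating (removing one curve only reconnects the complement through a tubular neighborhood). To check that $\Lambda$ is coarsely Lipschitz, take adjacent $c_0,c_1\in {\cal N\cal C}(S,n-1)$ with $c_1=(c_0-a)\cup b$: if $c_0\cup b$ is non-separating then the electrifying vertices $v_{c_0}$ and $v_{c_1}$ give paths of length $2$ in $\mathcal{E}$ from $\Lambda(c_0)$ and $\Lambda(c_1)$ to $c_0\cup b$; otherwise, insert an auxiliary non-separating curve $\omega\subset S-(c_0\cup a)$, whose existence is guaranteed by the hypothesis $n<g/2+1$ (which implies $g-n+2\geq 3$), and apply the first case twice. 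The coarse inverse direction is actually easier in the richer graph ${\cal N\cal C}(S,n,1)$, because adjacent $n$-multicurves there still share an $(n-1)$-submulticurve, so their images can be chosen to coincide.

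The only thing to verify explicitly is the bookkeeping alluded to above: every edge of ${\cal N\cal C}(S,n)$ is an edge of ${\cal N\cal C}(S,n,1)$, and the electrifying vertices together with their incident edges in $\mathcal{E}$ coincide with those constructed in Lemma \ref{quasiiso}. Consequently the Lipschitz estimates of Lemma \ref{quasiiso} transfer directly, which is why the author is entitled to call the statement immediate.
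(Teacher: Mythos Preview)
Your proposal is correct and matches the paper's approach exactly: the paper's own proof consists solely of the remark that the statement is ``immediate from the reasoning in Lemma~\ref{identifyhnu} and Lemma~\ref{quasiiso}'', and you have faithfully unpacked that reasoning, including the key observation that adjacency in $H_\nu(1)$ forces $\beta\cap\beta'=\nu$ and hence reduces to adjacency in ${\cal A}(S-\nu,1)$. (Minor slip: $S-(c_0\cup a)$ should read $S-(c_0-a)$, a typo already present in the paper.)
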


Our first goal is to apply
Theorem \ref{hypextension} to the family
${\cal H}(1)$ of subgraphs of ${\cal N\cal C}(S,n,1)$ 
to deduce hyperbolicity
of ${\cal N\cal C}(S,n,1)$ from hyperbolicity
of ${\cal N\cal C}(S,n-1)$. 
To this end
we have to check that the assumptions in the theorem
are satisfied. 

For $\nu\not=\zeta\in {\cal N\cal C}(S,n-1)$,
the vertex set of  
the intersection $H_\nu(1)\cap H_\zeta(1)$ is 
the set of all non-separating 
$n$-multicurves which contain both 
$\nu$ and $\zeta$ and hence it consists of  
at most one point. Thus ${\cal H}(1)$ is bounded.

By the first part of Lemma \ref{identify2} and
by Lemma \ref{inductbegin}, for every
$\nu\in {\cal N\cal C}(S,n-1)$ 
the graph 
$H_\nu(1)$ is $\delta$-hyperbolic for a number $\delta>0$ which 
does not depend on $\nu$.

The final step is the verification of the
bounded penetration property, which is more involved.

Let ${\cal E}$ be the ${\cal H}(1)$-electrification of 
${\cal N\cal C}(S,n,1)$. 
Let $\beta:[0,k]\to {\cal E}$ 
be an efficient simplicial quasi-geodesic. 
If the integer $i<k$ is such that $\beta(i),\beta(i+1)\in 
{\cal N\cal C}(S,n,1)$ then $\beta(i)$ and $\beta(i+1)$
are $n$-multicurves, and 
$\nu=\beta(i)\cap \beta(i+1)$
is an $(n-1)$-multicurve such that $\beta(i),
\beta(i+1)\in H_\nu(1)$.
If $\beta(i)=v_\nu$ is a special 
vertex defined by an $(n-1)$-multicurve $\nu$ 
then $\beta(i-1),\beta(i+1)\in H_\nu(1)$.

As in Section 2, an
enlargement of $\beta$ is a path
$\hat \beta:[0,m]\to {\cal N\cal C}(S,n,1)$
defined as follows.
For each $i$ such that $\beta(i)=v_\nu$ for some 
$\nu\in {\cal N\cal C}(S,n-1)$, 
replace the arc $\beta[i-1,i+1]$ by a path of the form
$j\to \nu\cup \zeta(j)$ where
$\zeta$ is a simplicial geodesic in ${\cal A}(S-\nu,1)$ connecting
$\beta(i-1)-\nu$ to $\beta(i+1)-\nu$.

By induction, we now assume that for every
$L>1$ there is number $\kappa^\prime(L)>0$ with the following
property. 
Let $X\subset S$ be a connected subsurface of genus
$g/2<h\leq g-n+1<g$. If 
$\alpha:[0,\ell]\to {\cal N\cal C}(S,n-1)$ 
is an $L$-quasi-geodesic with the property
that $\pi_X(\alpha(i))\not=\emptyset$ for all $i$ then 
the diameter of $\pi_X(\cup_i\alpha(i))$ 
in the curve graph of $X$ does not exceed
$\kappa^\prime(L)$.
Note that the case $n=2$ 
holds true by 
the results in Section 3.

Lemma \ref{quasiiso} then implies that 
for every $L>1$ there is a
number $\kappa(L)>3L$ with the following property.
Let $\beta:[0,k]\to {\cal E}$ be an 
efficient $L$-quasi-geodesic.
If $X\subset S$ is a connected subsurface of genus 
$g/2<h\leq g-n+1$ so that the diameter
of $\pi_X(\beta(0)\cup \beta(k))$ in the curve graph of $X$ is 
at least $\kappa(L)$, then there is 
an $(n-1)$-multicurve $\nu\in {\cal N\cal C}(S,n-1)$ 
disjoint from $X$, and there is some
$i<k$ so that
$\beta(i)\in H_\nu(1)\subset {\cal N\cal C}(S,n,1)$.

As in Section 3,
an enlargement $\hat\beta$ of $\beta$ 
admits a \emph{canonical modification} $\tilde\beta$ 
as follows. If $\beta(i)=\nu\cup a,
\beta(i+1)=\nu\cup b$ are such that the simple closed
curves 
$a,b$ are not disjoint but contained in a thick subsurface
$X$ of $S-\nu$ of Euler characteristic
$\chi(X)=\chi(S)+1=\chi(S-\nu)+1$, then replace the edge
$\beta[i,i+1]$ in ${\cal N\cal C}(S,n,1)$ 
by an edge-path $\zeta[j-1,j+1]$ 
of length two
in the space of (not necessarily non-separating)
$n$-multicurves so that $\zeta(j-1)= 
\beta(i)=\nu\cup a,\zeta(j+1)=\beta(i+1)=\nu\cup b$
and that 
$\zeta(j)=\nu\cup c$ for a (perhaps
separating) simple closed curve $c\subset S-\nu$ 
which is disjoint from $a,b$ and $X$.

The following can
now be derived 
from the results in Section 2, Lemma \ref{identifyhnu} and 
Lemma \ref{quasiiso}.

\begin{proposition}\label{intermediate}
\begin{enumerate}
\item The graph ${\cal N\cal C}(S,n,1)$ is hyperbolic.
\item For every $L>1$ there are numbers $L^\prime>1$,
$\kappa(L)>0$ with the 
following property. 
Let $\hat\beta:[0,k]\to {\cal N\cal C}(S,n,1)$
be an enlargement of an efficient $L$-quasi-geodesic 
in ${\cal E}$.
\begin{enumerate}
\item $\hat\beta$ 
is an $L^\prime$-quasi-geodesic
in ${\cal N\cal C}(S,n,1)$. 
\item Let $X\subset S$ be a connected subsurface
of genus $h\in (g/2,g-n+1]$ and Euler characterisitic
$\chi(X)=\chi(S)+1$. If
${\rm diam}(\pi_X(\beta(0)\cup \beta(k)))\geq \kappa(L)$
then a canonical modification of 
$\hat\beta$ passes through the complement of $X$.
\end{enumerate}
\end{enumerate}
\end{proposition}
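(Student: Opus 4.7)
The plan is to verify the three hypotheses of Theorem \ref{hypextension} for the graph ${\cal N\cal C}(S,n,1)$ equipped with the family ${\cal H}(1)$, and then read off parts (1) and (2a); part (2b) will follow by tracing through the subsurface projection bookkeeping. Lemma \ref{identify2}(1) together with Proposition \ref{curves} applied inside $S-\nu$ (a surface of genus $g-n+1\geq 2$, since $n<g/2+1$ and $g\geq 2$) gives uniform $\delta$-hyperbolicity of each $H_\nu(1)$, which is hypothesis (1). Lemma \ref{identify2}(2) together with the induction hypothesis that ${\cal N\cal C}(S,n-1)$ is hyperbolic gives hypothesis (2). Boundedness of ${\cal H}(1)$ is immediate since $H_\nu(1)\cap H_\zeta(1)$ contains at most one vertex when $\nu\neq\zeta$.

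The main obstacle is the bounded penetration property. Suppose $\beta$ is an efficient $L$-quasi-geodesic in ${\cal E}$ with $\beta(i)=v_\nu$ for which the distance $d_{H_\nu(1)}(\beta(i-1),\beta(i+1))$ is large. Via Lemma \ref{identify2}(1) and property c) established in the proof of Proposition \ref{curves} (applied to $S-\nu$), this forces a large subsurface projection of $\beta(i-1)$ and $\beta(i+1)$, viewed as curves in $S-\nu$, to some thick subsurface $X$ of $S-\nu$. Thickness of $X$ in $S-\nu$ makes its genus equal to $g-n+1$, and the hypothesis $n<g/2+1$ ensures $g-n+1>g/2$, placing $X$ within the scope of the inductive assumption recalled just above the proposition. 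That assumption, transferred through Lemma \ref{quasiiso}, forces any competing efficient $L$-quasi-geodesic $\beta'$ in ${\cal E}$ with the same endpoints to pass through some $H_\zeta(1)$ with $\zeta$ disjoint from $X$. Since ${\cal H}(1)$ is $1$-bounded and subsurface projection constrains the allowable position of $\zeta$ relative to $\nu$, standard tightening arguments identify $\zeta$ with $\nu$ (up to bounded error) and produce the entry/exit control required for the function $p(L)$.

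With all three hypotheses verified, Theorem \ref{hypextension} yields part (1), and shows that enlargements of geodesics in ${\cal E}$ are uniform quasi-geodesics in ${\cal N\cal C}(S,n,1)$. The extension to enlargements of arbitrary efficient $L$-quasi-geodesics in ${\cal E}$ is a routine consequence of fellow-traveling in hyperbolic graphs, completing part (2a).

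For part (2b), the hypothesis ${\rm diam}(\pi_X(\beta(0)\cup\beta(k)))\geq \kappa(L)$, combined with the recalled inductive statement and Lemma \ref{quasiiso}, produces an $(n-1)$-multicurve $\nu$ disjoint from $X$ together with an index $i$ for which $\beta(i)\in H_\nu(1)$. The enlargement $\hat\beta$ then traverses $H_\nu(1)$ along a geodesic in ${\cal A}(S-\nu,1)$. Applying Proposition \ref{curves} inside $S-\nu$, with its own canonical modification, we may arrange that after canonical modification this traversal visits a non-separating simple closed curve $a\subset S-\nu$ disjoint from $X$. The multicurve $\nu\cup a$ then has every component disjoint from $X$, so a canonical modification of $\hat\beta$ passes through the complement of $X$, which is the claimed conclusion.
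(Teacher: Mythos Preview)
Your overall strategy---verify the three hypotheses of Theorem~\ref{hypextension}---matches the paper's, and your treatment of hypotheses (1) and (2) and of boundedness is fine (modulo the minor point that hyperbolicity of $H_\nu(1)\cong{\cal A}(S-\nu,1)$ comes from Lemma~\ref{inductbegin}, not from Proposition~\ref{curves}).

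The genuine gap is in your verification of the bounded penetration property. You assert that large $d_{H_\nu(1)}(\beta(i-1),\beta(i+1))$, via property~c) from the proof of Proposition~\ref{curves}, forces a large projection to \emph{some thick subsurface $X$ of $S-\nu$}. Property~c) does not say this: it bounds projections of quasi-geodesics that miss the complement of a given thick subsurface, it does not produce a thick subsurface with large projection from large curve-graph distance. Indeed, two curves far apart in ${\cal C\cal G}(S-\nu)$ may fill $S-\nu$ and have no proper subsurface (thick or otherwise) with large projection. And even if such an $X\subsetneq S-\nu$ existed, there would in general be many $(n-1)$-multicurves $\zeta$ disjoint from $X$, so your ``standard tightening argument'' identifying $\zeta$ with $\nu$ is not available.

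The paper's fix is to take $X=S-\nu$ itself, viewed as a subsurface of $S$. Large distance in $H_\nu(1)\cong{\cal A}(S-\nu,1)$ between $\beta(i-1)=\nu\cup a$ and $\beta(i+1)=\nu\cup b$ is (via Lemma~\ref{inductbegin}) exactly large ${\rm diam}(\pi_{S-\nu}(\beta(i-1)\cup\beta(i+1)))$. Since $S-\nu$ has genus $g-n+1\in(g/2,g-n+1]$, the induction hypothesis (transferred via Lemma~\ref{quasiiso}) applies directly: the segments of $\beta$ before and after the visit to $H_\nu(1)$ have bounded projection to $S-\nu$, so the \emph{endpoints} of $\beta$ have large projection to $S-\nu$. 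For any competing efficient $L$-quasi-geodesic $\beta'$ with the same endpoints, the contrapositive of the induction hypothesis then forces some $(n-1)$-multicurve along $\beta'$ to have empty projection to $S-\nu$; but the only such multicurve is $\nu$ itself. Hence $\beta'$ passes through $v_\nu$, and the entry/exit control follows in the same way. No search for a proper thick subsurface and no tightening are needed.
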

\begin{proof} By the results in Section 2, by
Lemma \ref{identifyhnu} and Lemma \ref{quasiiso},
to show hyperbolicity of ${\cal N\cal C}(S,n,1)$  
we only have to show the bounded penetration
property for efficient quasi-geodesics in ${\cal E}$
and the family of subgraphs ${\cal H}(1)$.

To this end let $\beta:[0,k]\to {\cal E}$ be an efficient  
$L$-quasi-geodesic in ${\cal E}$ and let 
$\nu\in {\cal N\cal C}(S,n-1)$.
Assume that
\[{\rm diam}(\pi_{S-\nu}(\beta(0)\cup\beta(k)))\geq 3\kappa(L).\]
Then by induction hypothesis,
$\beta$ passes through 
$H_\nu(1)$. 

The diameter of $H_\nu(1)$ in ${\cal E}$ 
equals two.
Thus if $0\leq i\leq j\leq k$ are the first and last
points, respectively, of the intersection of $\beta$ with
$H_\nu(1)$, then the length $j-i$ of $\beta[i,j]$ is 
at most $3L<\kappa(L)$.

An application of the induction hypothesis
to $\beta[0,i]$ and to $\beta[j,k]$ shows that
\[{\rm diam}(\pi_X(\beta(0)\cup \beta(i)))\leq \kappa(L)
\text{  and }
{\rm diam}(\pi_X(\beta(j)\cup \beta(k)))\leq \kappa(L)\]
and therefore 
${\rm diam}(\pi_{X}(\beta(i)\cup \beta(j)))\geq \kappa(L)>3L$.

Hence $\beta[i,j]$ passes through
the special vertex $v_\nu$.
Moreover, 
the points $\beta(i),\beta(j)$ 
are of the form $\nu\cup a(i),\nu\cup a(j)$ for 
non-separating simple closed curves 
$a(i),a(j)\subset S-\nu$ such that
${\rm diam}(\pi_{S-\nu}(\beta(0)\cup a(i)))\leq \kappa(L)$ 
and 
${\rm diam}(\pi_{X}(\beta(k)\cup a(j)))\leq \kappa(L)$.
This immediately implies the bounded
penetration property for the regions $H_\nu(1)$ 
and completes the proof of 
hyperbolicity of ${\cal N\cal C}(S,n,1)$.

The other statements are also immediate from the induction hypothesis
and Theorem \ref{hypextension}.
\end{proof}

Next we show that hyperbolicity of 
${\cal N\cal C}(S,n,p-1)$ implies 
hyperbolicity of ${\cal N\cal C}(S,n,p)$.
To this end we proceed as in Section 3. 
The proofs are completely analogous to the 
proofs in Section 3.

Once more,
our goal is to apply Theorem \ref{hypextension}.
For this denote for a connected subsurface $X$ of $S$ of genus
$g-n+1$ and Euler characteristic $\chi(X)=\chi(S)+p-1$
by $B_X$ 
the complete subgraph of   
${\cal N\cal C}(S,n,p-1)$ 
whose vertices consist of 
all non-separating
$n$-multicurves $\nu$ which are disjoint from the 
boundary of $X$.
By assumption on the genus
of $X$, every non-separating $n$-multicurve 
$\nu$ intersects
$X$ and therefore a vertex $\nu\in B_X$ has
at least one component which is contained in $X$.

In the next lemma, the assumption
$n-1<g/2$ is used in an essential way.

\begin{lemma}\label{subsurfacetwo}
There is a number $L_0>1$ not depending on $X$ 
such that 
$B_X$ is $L_0$-quasi-isometric to ${\cal A}(X,1)$.
\end{lemma}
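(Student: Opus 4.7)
The plan is to construct mutually quasi-inverse coarsely Lipschitz maps $\Phi : B_X \to {\cal A}(X,1)$ and $\Psi : {\cal A}(X,1) \to B_X$, in the spirit of the identifications used in Section 3. I would define $\Phi(\nu)$ to be any component of $\nu \cap X$ which is non-separating in $X$. The hypothesis $n - 1 < g/2$ first enters in guaranteeing such a component exists: the complement $Y = S - X$ has total genus $n - 1 < g/2 < g - n + 1 = \mathrm{genus}(X)$, and a topological argument (based on $Y$ carrying too little homology to make $\nu$ non-separating in $S$ by itself, together with the non-separating hypothesis on $\nu$ forcing one of its $X$-components to be non-separating in $X$) produces the required curve. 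Any two valid choices are disjoint in $X$ and hence at ${\cal A}(X,1)$-distance at most one, so $\Phi$ is coarsely well-defined. For $\Psi$, extend a non-separating $c \subset X$ to a non-separating $n$-multicurve in the interior of $X$; this is possible because $X - c$ has genus $g - n \geq n - 1$, the second use of $n - 1 < g/2$. By construction $\Phi \circ \Psi$ is coarsely the identity.

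For the Lipschitz property of $\Psi$, adjacent $c, c' \in {\cal A}(X,1)$ are either disjoint or jointly contained in a thick subsurface of $X$ of Euler characteristic $\chi(X) + 1$; in both cases the extensions $\Psi(c), \Psi(c')$ can be chosen to share $n - 2$ components, giving a $B_X$-path of uniformly bounded length. For the Lipschitz property of $\Phi$, an edge $\nu \sim \nu'$ in $B_X$ provides $\hat\nu = \nu \cap \nu'$ an $(n - 1)$-multicurve and an ${\cal A}(S - \hat\nu, p - 1)$-edge between the residual curves $a = \nu - \hat\nu$ and $b = \nu' - \hat\nu$. When $\hat\nu$ already contributes a non-separating $X$-component, one picks $\Phi(\nu) = \Phi(\nu')$ to be that common curve; otherwise the existence argument above forces both $a$ and $b$ to lie in $X$ and be non-separating there, and one must bound their ${\cal A}(X,1)$-distance.

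The principal obstacle is this last step. The ${\cal A}(S - \hat\nu, p - 1)$-edge between $a$ and $b$ means either $a \cap b = \emptyset$, giving an ${\cal A}(X,1)$-edge directly, or joint containment in a thick subsurface $W \subset S - \hat\nu$ of Euler characteristic $\chi(W) = \chi(S) + p - 1 = \chi(X)$. In the second case, because $\hat\nu$ contributes no non-separating $X$-curve it lies essentially in $Y$, and the intersection $W \cap X$ is a subsurface of $X$ of full genus $g - n + 1$ whose Euler characteristic differs from $\chi(X) + 1$ by a bounded amount depending only on $n$ and $p$; after absorbing this discrepancy into boundedly many intermediate ${\cal A}(X,1)$-edges, one obtains a bounded ${\cal A}(X,1)$-path joining $a$ and $b$. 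All the quantitative constants are determined by $n$ and $p$ alone, producing the claimed $L_0$-quasi-isometry with $L_0$ independent of $X$.
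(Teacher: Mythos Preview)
Your overall strategy coincides with the paper's: define a map $\Phi:B_X\to{\cal A}(X,1)$ by selecting a component of $\nu$ lying in $X$, and a section $\Psi:{\cal A}(X,1)\to B_X$ by extending a curve to a non-separating $n$-multicurve. The paper's $\Psi$ is a minor variant of yours: it fixes once and for all a non-separating $(n{-}1)$-multicurve $\nu_0\subset S-X$ and sends $b\mapsto\nu_0\cup b$; it then shows this image is coarsely dense in $B_X$ (distance at most $2n{-}1$ from any vertex), and observes via Lemma~\ref{identify} that the induced edge structure on $\{\nu_0\cup b\}$ agrees with that of ${\cal A}(X,1)$. Your choice of extending entirely inside $X$ is an acceptable alternative and your two invocations of $n-1<g/2$ are placed correctly.

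There is, however, a real gap in your ``principal obstacle'' paragraph. In the case where $a,b\subset X$ lie in a common thick subsurface $W\subset S-\hat\nu$, you pass to $W\cap X$ and assert that it has full genus $g-n+1$ with Euler characteristic within a bounded amount of $\chi(X)+1$. This is not justified and in general is false. Even granting $\hat\nu\subset Y$ so that $X$ and $W$ are both thick subsurfaces of $S-\hat\nu$, their complements are (in the language of Lemma~\ref{identify}) neighborhoods of forests $F_X,F_W$ whose vertex sets are the punctures of $S-\hat\nu$; the union $F_X\cup F_W$ may well contain cycles, in which case $W\cap X$ has strictly smaller genus and is not thick in $X$. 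Your preceding inference ``$\hat\nu$ contributes no non-separating $X$-curve, hence $\hat\nu$ lies essentially in $Y$'' is also too quick: $\hat\nu$ may have components in $X$ that separate $X$, and then $X\not\subset S-\hat\nu$ to begin with.

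The paper does not intersect $W$ with $X$. What it uses (explicitly for the pairs $\nu_0\cup a,\nu_0\cup b$, and implicitly in the final assertion about general adjacent vertices) is the forest argument of Lemma~\ref{identify}: rather than cutting $W$ down, one \emph{constructs} a thick subsurface of $X$ with Euler characteristic $\chi(X)+1$ containing $a\cup b$ by adjoining to the forest defining $X$ a single suitably chosen edge coming from the forest defining $W$. That construction, not intersection, is the missing step in your argument.
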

\begin{proof} 
Let $\nu_0\subset S-X$ be a non-separating $(n-1)$-multicurve.
Then for every simple closed non-separating
curve $b\subset X$, the union $\nu_0\cup b$ is a 
vertex in ${\cal N\cal C}(S,n,p)$. Moreover, 
by Lemma \ref{identify}, two such 
pairs $\nu_0\cup a,\nu_0\cup b$ 
are connected by an edge in ${\cal N\cal C}(S,n,p)$ if and only if 
$a,b$ are connected by an edge in ${\cal A}(X,1)$.

Now let $\zeta\in B_X$ be arbitrary. Then the number
$k$ of components of $\zeta$ contained in $X$ 
is non-zero. Let $c_1,\dots,c_k$ be these components.
Then $c_1\cup\dots\cup c_k$ is non-separating
$k$-multicurve contained in $X$. 

Choose a non-separating
$n$-multicurve $\zeta^\prime\subset c_1\cup\dots \cup c_k$
contained in $X$. 
Such a multicurve exists since $n <h$.
By the definition of $B_X$, the distance
in $B_X$ between $\zeta$ and $\zeta^\prime$ equals $n-k$.
Moreover, the distance in $B_X$ between $\zeta^\prime$ and 
and a non-separating $n$-multicurve containing
$\nu_0$ equals $n-1$. But this just means that
the subspace of $B_X$ of non-separating $n$-multicurves
containing $\nu_0$ is coarsely dense. 
Consequently, associating to a vertex $\zeta\in B_X$ a
component of $\zeta$ contained in $X$ defines a Lipschitz
map $B_X\to {\cal A}(X,1)$. 

That this map coarsely
does not decrease distances follows from the
fact that any vertex $\zeta\in B_X$ contains
at least one component which is contained in $X$, and
adjacent vertices contain components which either
are disjoint or contained in a common thick
subsurface of $X$ of Euler characteristic 
$\chi(X)+1$.
\end{proof}

Define a family of subgraphs 
\[{\cal B}(p)=\{B_X\mid X\}\] 
of ${\cal N\cal C}(S,n,p)$ 
where $X$ runs through
the subsurfaces of $S$ of genus $g-n+1$ and Euler characteristic
$\chi(X)=\chi(S)+p-1$. 
By Lemma \ref{subsurfacetwo} and Lemma \ref{inductbegin}, 
each of the graphs $B_X$ is quasi-isometric
to the curve graph of $X$, in particular it is hyperbolic. 

We claim that the family ${\cal B}(p)$ is bounded.
To this end let $X,Y$ be two subsurfaces of $S$ of the same
genus $g-n+1$ and the same Euler characteristic
$\chi(S)+p-1$. Let $\nu\in B_X\cap B_Y$; then
$\nu$ is a non-separating $n$-multicurve disjoint from the boundaries
of both $X,Y$. Since the genus of $X,Y$ equals
$g-n+1$, at least one component of $\nu$ is contained
in $X\cap Y$. However, since $X\not=Y$, $X\cap Y$ is 
a proper subsurface of $X$. By Lemma \ref{subsurfacetwo}
and Lemma \ref{inductbegin}, 
$B_X$ is quasi-isometric to the curve graph of $X$
and hence the diameter of $B_X\cap B_Y$ is
uniformly bounded.

Lemma \ref{subsurfacetwo} shows that each of the graphs
$B_X$ is $\delta$-hyperbolic for a number
$\delta >0$ not depending on $X$. Moreover,
by the definition of the graphs ${\cal N\cal C}(S,n,p)$,
the graph ${\cal N\cal C}(S,n,p-1)$ is quasi-isometric to 
the ${\cal B}$-electrification of ${\cal N\cal C}(S,n,p-1)$.
Thus for an application of Theorem \ref{hypextension}, we are
left with showing the bounded penetration property.

However, this property follows by the induction 
assumption on subsurface projection and Lemma \ref{subsurfacetwo}.

\bigskip
{\bf Example:} The following example was observed by
Tarik Aougab and  
Saul Schleimer and shows that the bound $n<g/2+1$ in 
Theorem \ref{cut} is sharp.

Namely, let $S$ be a closed surface of genus $4$ and let
$d$ be a separating simple closed curve which 
decomposes $S$ into two surfaces $X_1,X_2$ of genus
$2$ with one boundary component. 
Let $\phi_i$ be a pseudo-Anosov element in 
the mapping class group of $X_i$ and let
$a_i$ be a non-separating 2-multicurve 
in $X_i$ $(i=1,2)$.
Let moreover $c$ be a non-separating simple closed
curve which is disjoint from $a_i$ and 
intersects $d$ in two points so that
$a_1,a_2,c$ defines a non-separating
$5$-multicurve $\nu$.

For all $k,\ell\in \mathbb{Z}$ the pair 
$(\phi_1^k,\phi_2^\ell)$ defines a reducible mapping class for $S$,
moreover it is easy to see that the 
distance in ${\cal N\cal C}(S,5)$ between
$\nu$ and $(\phi_1^k,\phi_2^\ell)\nu$ is comparable to $k+\ell$.
The reason is that the subsurface projection
of any non-separating $5$-multicurve in $S$ into 
each of the subsurfaces $X_1,X_2$ does not vanish. 
But this just means that
${\cal N\cal C}(S,5)$ contains
a quasi-isometrically embedded $\mathbb{R}^2$.

\bigskip

Define a subsurface $Y$ of $S$ to be \emph{$n$-heavy} 
if the genus of 
$Y$ is at least $g-n+1$.
Let ${\cal L}(Y)$ be the set of all minimal geodesic laminations
which fill up $Y$.
Similarly to Corollary \ref{boundary} we have

\begin{corollary}\label{boundary2}
The Gromov boundary of ${\cal N\cal C}(S,n)$ equals 
$\cup_Y{\cal L}(Y)$ equipped with the coarse Hausdorff topology
where $Y$ passes through the $n$-heavy subsurfaces of $S$.
\end{corollary}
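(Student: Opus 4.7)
The plan is to adapt the argument from Section 6 of \cite{H13} that was used to prove Corollary \ref{boundary}, now driven by the level-$p$ hierarchy paths and the two electrifications $\{H_\nu\}$, $\{B_X\}$ built in the proof of Theorem \ref{cutsystem}. The statement has two halves: each minimal filling lamination on an $n$-heavy subsurface defines a distinct point of the Gromov boundary $\partial{\cal N\cal C}(S,n)$, and conversely every boundary point is described by such a lamination.

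First I would define the candidate boundary map. Given an $n$-heavy subsurface $Y$ and $\lambda\in {\cal L}(Y)$, choose a sequence of non-separating $n$-multicurves $(\nu_i)$ whose restrictions to $Y$ Hausdorff-converge to $\lambda$. Because $Y$ is $n$-heavy, any non-separating $n$-multicurve in $S$ must intersect $Y$ (otherwise $S-Y$, of genus at most $n-1$, would contain $n$ disjoint homologically independent curves), so $\pi_Y(\nu_i)$ is defined for every $i$ and diverges in ${\cal C\cal G}(Y)$. The bounded penetration properties for the families ${\cal H}(1)$ and ${\cal B}(p)$, together with the subsurface projection estimates from the proof of Proposition \ref{intermediate}, then force $d_{{\cal N\cal C}(S,n)}(\nu_0,\nu_i)\to\infty$ and show that $(\nu_i)$ is a Gromov sequence whose limit depends only on the coarse Hausdorff class of $\lambda$.

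Next I would invert this map. Given a Gromov sequence $(\nu_i)\subset{\cal N\cal C}(S,n)$, connect consecutive multicurves by level-$p$ hierarchy paths built from the nested electrifications and apply the bounded penetration property together with Proposition \ref{intermediate}(2)(b) at every level of the induction. This extracts, as in Section 6 of \cite{H13}, a maximal connected subsurface $Y\subseteq S$ whose subsurface projection of $(\nu_i)$ diverges, and a coarse Hausdorff limit $\lambda\in{\cal L}(Y)$ filling $Y$. The crucial point is that $Y$ must be $n$-heavy: if the genus of $Y$ were smaller than $g-n+1$ then the complement $S-Y$ would still carry non-separating $n$-multicurves, so a cofinal subsequence could be replaced by multicurves disjoint from $Y$ (joined to $(\nu_i)$ by edges of ${\cal N\cal C}(S,n)$ at bounded cost through the appropriate $H_\nu$ or $B_X$), contradicting the divergence of $\pi_Y(\nu_i)$. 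Distinct laminations give distinct boundary points because their subsurface projections diverge in different directions.

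The main obstacle is this second direction, particularly the verification that the filling subsurface $Y$ produced is $n$-heavy and is determined uniquely by the sequence. This requires a careful inductive unwinding of the two-level hierarchy structure (the passage $n-1 \leadsto n$ via ${\cal H}(1)$, and the passage $p-1\leadsto p$ via ${\cal B}(p)$) and a combinatorial argument showing that any candidate $Y$ of genus less than $g-n+1$ can be bypassed in ${\cal N\cal C}(S,n)$. Once this is in place, the identification of the topology of $\partial{\cal N\cal C}(S,n)$ with the coarse Hausdorff topology on $\bigcup_Y{\cal L}(Y)$ is a formal consequence of the quasi-geodesic properties of level-$p$ hierarchy paths, exactly as in \cite{H13}.
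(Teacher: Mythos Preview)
Your proposal is correct and aligned with the paper's approach: the paper gives no proof of this corollary at all, merely writing ``Similarly to Corollary~\ref{boundary} we have'' and relying on the reader to carry over the arguments from \cite{H13}. What you have sketched is exactly that adaptation, and in fact your outline is considerably more detailed than anything the paper provides for either Corollary~\ref{boundary} or Corollary~\ref{boundary2}.
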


\noindent
{\bf Remark:} The main result in this note can also
be obtained with the tools developed in \cite{MS13}. 
To the best of our knowledge, these tools do not have
any obvious advantage over the tools we used.

\noindent
MATHEMATISCHES INSTITUT DER UNIVERSIT\"AT BONN\\
ENDENICHER ALLEE 60,\\ 
53115 BONN, GERMANY\\

\smallskip

\noindent e-mail: ursula@math.uni-bonn.de

\end{document}